\newcommand{\R}{\mathbb{R}}
\newcommand{\N}{{\mathbb N}}
\newcommand{\Z}{\mathbb Z}
\newcommand{\ds}{\displaystyle}
\newcommand{\sn}{{\mbox{SN}}}
\newcommand{\cn}{{\mbox{CN}}}
\newcommand{\dn}{{\mbox{DN}}}
\numberwithin{equation}{section}
\newtheorem{theorem}{Theorem}[section]
\newtheorem{proposition}[theorem]{Proposition}
\newtheorem{remark}[theorem]{Remark}
\newtheorem{lemma}[theorem]{Lemma}
\newtheorem{corollary}[theorem]{Corollary}
\newtheorem{definition}[theorem]{Definition}
\begin{document}
\vglue-1cm \hskip1cm
\title[Stability of periodic waves for the regularized Schamel equation]{Orbital stability of periodic traveling-wave solutions for the regularized Schamel equation}



\begin{center}


\subjclass[2010]{Primary 35A01, 35Q53 ; Secondary 35Q35}

\keywords{Regularized Schamel equation; Orbital stability; Lam\'e equation; Periodic waves}

\maketitle

{\bf Thiago Pinguello de Andrade}

{Departamento de Matem{\'a}tica - Universidade Tecnol{\'o}gica Federal do Paran{\'a}  \\
Av. Alberto Carazzai, 1640, CEP 86300-000, Corn{\'e}lio Proc{\'o}pio, PR, Brasil.}\\
{thiagoandrade@utfpr.edu.br}

\vspace{3mm}

{\bf Ademir Pastor}

{ IMECC-UNICAMP\\
Rua S\'ergio Buarque de Holanda, 651, CEP 13083-859, Campinas, SP,
Brazil.  } \\{ apastor@ime.unicamp.br}

\end{center}

\begin{abstract}
In this work we study the orbital stability of periodic traveling-wave solutions for dispersive models. The study of traveling waves started in the mid-18th century when John S. Russel established that the flow of water waves in  a shallow channel has constant evolution. In recent years, the general strategy to obtain orbital stability consists in proving that the traveling wave in question minimizes a conserved functional restricted to a certain manifold. Although our method can be applied to other models, we deal  with the regularized Schamel equation, which contains a fractional nonlinear term. We obtain a smooth curve of periodic traveling-wave solutions depending on the Jacobian elliptic functions and prove that such solutions are orbitally stable in the energy space.
In our context, instead of minimizing the augmented Hamiltonian in the natural codimension two manifold, we minimize it in a ``new'' manifold, which is suitable to our purposes.
 
\end{abstract}

\section{Introduction}

\indent This paper sheds new contributions in the sense of obtaining orbital stability of periodic traveling waves for nonlinear  dispersive models. Although, we pay particular attention to the regularized Schamel equation, 
\begin{equation}
u_t-u_{xxt}+(u+|u|^{3/2})_x=0,
\label{bbm}
\end{equation}
such ideas can be applied to a large class of dispersive models (see \cite{acn} and \cite{depas}).

The Korteweg-de Vries (KdV) equation,
\begin{equation}\label{kdvint}
u_t+u_{xxx}+\frac{1}{2}(u^2)_x=0,
\end{equation}
the regularized long-wave  or Benjamin-Bona-Mahony (BBM) equation
\begin{equation}\label{bbmint}
u_t-u_{xxt}+(u+\frac{1}{2}u^2)_x=0,
\end{equation}
and their various modifications are widely used models describing the propagation of nonlinear waves. Originally, \eqref{bbmint} was derived by Benjamin, Bona, and Mahony in \cite{bbm} as an alternative model to the KdV equation for small-amplitude, long wavelength surface water waves. 

For one hand, \eqref{bbm} can  be viewed as a regularized version of the Schamel equation
\begin{equation}\label{schamel}
u_t+u_{xxx}+(u+|u|^{3/2})_x=0,
\end{equation}
in much the same way that the BBM equation can be regarded as a regularized version of the KdV equation. Equation \eqref{schamel} was derived by Schamel \cite{sch}, \cite{sch1} as a model to describe the propagation of weakly nonlinear ion acoustic waves which are modified by the presence of trapped electrons. 

Both KdV and BBM equations also model the one dimensional waves in a cold plasma (see e.g., \cite{brosl}), with the difference that BBM equation describes much better the behavior of very short waves (see also \cite{tran}). The approach in \cite{brosl} is based on the use of approximate Hamiltonians. So, depending on the region of validity, a new equation for nonlinear ion waves is obtained. In a general setting, the canonical equations for one-dimensional waves, in dimensionless form, read as
\begin{equation}\label{appham}
\begin{cases}
 u_t-u_{xxt}=-\partial_x \dfrac{\delta H_a}{\delta v},\\
 v_t-v_{xxt}=-\partial_x \dfrac{\delta H_a}{\delta u},
\end{cases}
\end{equation}
where $H_a=H_a(u,v)$ is the approximate Hamiltonian and $u$ and $v$ are related to the ion mass density and ion mass velocity, respectively. If we take the Hamiltonian to be
$$
H_a=\frac{1}{2}\int \left( u^2+v^2+\frac{4}{5}\mbox{sgn}(u)|u|^{5/2} \right)dx
$$
we see that \eqref{appham} reduces to
\begin{equation}\label{appham1}
\begin{cases}
u_t-u_{xxt}=-\partial_x (v),\\\\
v_t-v_{xxt}=-\partial_x (u+|u|^{3/2}).
\end{cases}
\end{equation}
By putting $u+v=w$, $u-v=z$, adding the equations in \eqref{appham1}, and neglecting $z$, we find (up to constants) the BBM-like approximation for unidirectional waves as in \eqref{bbm} (in the variable $w$).

\begin{remark}
Although we will not discuss here, the region of validity of $H_a$ can be addressed as in \cite{brovati}. It is to be observed that the approximate Hamiltonian may be corrected to
$$
H_a=\frac{1}{2}\int \left( u^2+v^2+uv^2+\frac{4}{5}\mbox{sgn}(u)|u|^{5/2} \right)dx.
$$
By following the above steps, we obtain the equation
\begin{equation}\label{eqnonleq}
w_t-w_{xxt}=-\partial_x(w+aw^2+b|w|^{3/2}),
\end{equation}
where $a$ and $b$ are real constants. The nonlinearity appearing in \eqref{eqnonleq} is in agreement with \cite{sch1} (see also \cite{gill} and references therein for more recent results in this direction), where the author observed that in some physical situations, the nonlinearity in \eqref{schamel} should be corrected to $(u^2+|u|^{3/2})_x$. Following the ideas of our work, one can also study the existence and orbital stability of periodic traveling waves to \eqref{eqnonleq}.
\end{remark} 

Equations \eqref{bbm} and \eqref{schamel} as less studied than \eqref{bbmint} and \eqref{kdvint}, mainly because the fractional power in the nonlinear part brings a lot of difficulties which, in several aspects, cannot be handle with standard techniques.  We point out, however, that the spectral stability of periodic traveling-wave solutions for \eqref{schamel} was studied  in \cite{bjk}.

From the mathematical point of view, the generalized BBM equation
\begin{equation}\label{genbbm}
u_t-u_{xxt}+(f(u))_x=0
\end{equation}
has become a major topic of study in recent years and much effort has been expended on various aspects of \eqref{genbbm}. The issues include the initial-value (initial-boundary-value) problem, existence and stability of solitary and periodic traveling waves and global behavior of solutions. Thus, \eqref{bbm} can also be viewed as \eqref{genbbm} with $f(u)=u+|u|^{3/2}$. In this context, \eqref{bbm} has appeared, for instance, in \cite{ss} where the authors study the initial-value problem in the usual $L^2$-based Sobolev spaces and the nonlinear stability of solitary traveling waves (see also \cite{bmr}, \cite{johnson2}, \cite{lin} and references therein).

Our main goal in this work is to establish the existence and orbital stability of an explicit family of periodic traveling-wave solutions associated with \eqref{bbm}. The traveling waves we are interested in are of the form  $u(x,t)=\phi(x-ct)$, where $\phi$ is a periodic function of its argument and $c>1$ is a real parameter representing the wave speed. By replacing this form of $u$ in \eqref{bbm}, one sees that  $\phi$ must solve the nonlinear ODE
\begin{equation}
-c\phi ''+(c-1)\phi-\phi^{3/2}+A=0,
\label{el}
\end{equation}
where $A$ is an integration constant.

The constant $A$ in \eqref{el} plays a crucial role in the theory of nonlinear stability. Indeed, assume we are dealing with an invariant by translation Hamiltonian nonlinear evolution equation of the form $u_t=J\nabla E(u)$, where $E$ is the energy functional. Suppose the associated periodic traveling waves satisfy a conservative equation like
\begin{equation}\label{consedo}
-\phi''+h(\phi,c,A)=0,
\end{equation}
for some smooth function $h$. Here $c$ represents the wave speed and $A$ is an arbitrary constant. On one hand, when $A=0$ the, by now, classical stability theories \cite{be}, \cite{Grillakis}, \cite{weinstein1}, \cite{weinstein2} pass to showing that $\phi$ is a local minimum of $E$ restrict to a suitable manifold depending on the conserved quantity originated by translation invariance, say, $Q$. At this point, the coercivity of the functional $\widetilde{\mathcal{F}}=E+cQ$ develops a fundamental role and many works concerning the stability of periodic waves have appeared in the literature (see e.g., \cite{angulo7}-\cite{angulo1}, \cite{fp}, \cite{hik} \cite{NP1}-\cite{Neves1} and references therein). On the other hand, the situation  when $A\neq0$ is a little bit more delicate; the minimization of $\widetilde{\mathcal{F}}$ is not enough to produce the desired results  and, in general, we need to add an extra conserved quantity to $\widetilde{\mathcal{F}}$. Thus, it is reasonable to work with  a functional having the form $\mathcal{F}=E+cQ+AV,$ where $V$ is another conservation law. The quantity $V$ in general  does not come from an invariance of the equation. As a consequence, the theories mentioned above can not be directly applied  and this forces us to revisit its core  in order to cover the nonlinear stability in such situations (see also \cite{angulo4} and \cite{angulo1}).

Although in a different way, the case $A\neq0$ was addressed,  for instance  in \cite{johnson1}, \cite{johnson2} and \cite{natali2}, where the orbital stability of a three- or two-parameter family of periodic traveling waves associated with KdV-type and generalized BBM equations were established (see also \cite{angulo1}). In these works, the authors do not use the explicit form of the waves and prove the existence of local  families of traveling waves by using the standard theory of ODE's. Their method has the advantage that it avoids a lot of hard calculations which appear when explicit solutions are studied. However, as we will see below, our approach has the advantage that we can prove the needed spectral properties in a very simple way.

Let us now turn attention to the main steps of our constructions. As we already mentioned, our aim  consists in making some changes in the classical theories  developed in \cite{be}, \cite{Grillakis}, and \cite{weinstein1},  in order to deal with explicit periodic solutions of \eqref{el} obtained when $A\neq 0$.  With this in mind, we prove that \eqref{el} has a solution of the form
$$
\phi(\xi)=(\alpha+\beta \cn^2(\gamma \xi, k))^2,
$$
where $\cn$ denotes the Cnoidal Jacobi elliptic function, $k\in(0,1)$ represents the elliptic modulus and $\alpha, \beta$, and $\gamma$ are suitable constants depending, a priori, on $c$ and $A$. After some algebraic manipulations, one can write all parameters in terms of $k$, so that a smooth curve of explicit $L$-periodic solutions $k\in J\to\phi_k$ can be obtained.   In particular, the parameters $c$ and $A$ can also be written as functions of $k$.

The conserved quantities appearing here are
\begin{equation}
E(u)=\frac{1}{2}\int_0^L \left(u_x^2-\frac{4}{5}\mbox{sgn}(u)|u|^{\frac{5}{2}}\right)dx, \qquad Q(u)=\frac{1}{2} \int_0^L (u^2+u_x^2)dx,
\label{quantitiesconserved}
\end{equation}
and
\begin{equation}\label{quantitiesconserved1}
 V(u)=\int_0^Ludx.
\end{equation}
Note that $E$ is a smooth functional in any region that does not contain the origin of   $H^1_{per}([0,L])$.

According to our discussion above and taking into account \eqref{el}, the functional $\mathcal{F}$ becomes 
\begin{equation}
F_{(c,A)}(u):=E(u)+(c-1)Q(u)+AV(u),
\label{quantidadeconservadaF}
\end{equation}
and we will need to minimize $F_{(c,A)}$ constrained to a suitable manifold  $\Sigma_k$, defined below. To this end, spectral properties of the linearized operator
\begin{equation}
\mathcal{L}_{(c,A)}:=F_{(c,A)}''(\phi)=-c\frac{d^2}{dx^2}+(c-1)-\frac{3}{2}\phi^{\frac{ 1}{2}}
\label{operadorlinearizado}
\end{equation} 
will be necessary. Since we can see $c$ and $A$ as functions of $k$, we can write $F_{(c(k),A(k))}=:F_k$ and $\mathcal{L}_{(c(k),A(k))}=:\mathcal{L}_k$.  We obtain the spectral properties we need, taking into account that the periodic eigenvalue problem associated with the operator $(1/c)\mathcal{L}_k$ can be reduced to an eigenvalue problem associated with a Lam\'e-type equation, namely,
\begin{equation*}
\left\{\begin{array}{l}
\ds\Lambda''(x)+\left[h-5\cdot 6\cdot k^2 \sn^2\left(x,k\right)\right]\Lambda(x)=0,\\
\Lambda(0)=\Lambda(2K(k)), \quad \Lambda'(0)=\Lambda'( 2K(k)).
\end{array}\right.
\end{equation*}

To summarize all the constructions we need, we will prove that the following four properties hold.

\begin{itemize}
\item[($P_0$)] There exists a nontrivial smooth curve of $L$-periodic solutions of (\ref{el}), $ k\in J  \mapsto \phi_k:=\phi_{(c(k),A(k))} \in H^2_{per}([0,L])$, where $J\subset(0,1)$ is an interval to be determined later.\\
\item[($P_1$)] The linearized operator $\mathcal{L}_k:=\mathcal{L}_{(c(k),A(k))}$ has a unique negative  eigenvalue, which is simple.\\
\item[($P_2$)] Zero is a simple eigenvalue of $\mathcal{L}_k$ with associated  eigenfunction $\phi_k'$.\\
\item[($P_3$)] The quantity $\Phi$ defined by $\Phi:=\left\langle\mathcal{ L}_k\left(\frac{\partial \phi_k}{\partial k}\right),\frac{\partial \phi_k}{\partial k}\right\rangle$
 is negative.
\end{itemize}

Once property $(P_0)$ has been proved, our strategy is to show that properties $(P_1)-(P_3)$ combine to establish the orbital stability of that traveling waves (see Section \ref{section5}). Although our theory demands computations involving the explicit traveling waves, it is simpler to be applied once it does not demand the existence of a two-parameter family of traveling waves.

The paper is organized as follows. In  Section \ref{section2}, we prove a global well-posedness result for the Cauchy problem associated with \eqref{bbm}. Since we are not interested in optimal regularity, we only prove a global well-posedness result in the energy space $H^1_{per}([0,L])$, which is sufficient to our purpose. In Section \ref{section3}, we show the existence of an explicit curve of traveling waves, which  establishes property $(P_0)$. In Section \ref{section4}, we prove the operator $\mathcal{L}_k$ has a unique negative and simple eigenvalue and that $0$ is a simple eigenvalue, by showing thus properties $(P_1)$ and $(P_2)$. The strategy to do this, is to reduce the periodic eigenvalue problem associated with $\mathcal{L}_k$ to another problem involving the so-called Lam\'e equation.  Finally, in last section, Section \ref{section5}, we prove property $(P_3)$ and show how such a condition implies the orbital stability of the traveling waves presented in $(P_0)$. \\

\noindent {\bf Notation.} For $s\in\mathbb{R}$, the Sobolev space
$H_{per}^{s}=H_{per}^{s}([0,L])$ is the set of all periodic distributions
such that $||f||_{H^s_{per}}^2:=
L\sum_{k=-\infty}^{+\infty}(1+|k|^2)^s|\widehat{f}(k)|^2 <\infty, $ where
$\widehat{f}$ is the (periodic) Fourier transform of $f$. For $s=0$, $H_{per}^{s}([0,L])$ is isometric to $L_{per}^{2}([0,L])$. The norm and inner product  in $L^2_{per}$ will be denoted by $\|\cdot\|$ and $(\cdot,\cdot)_{L^2_{per}}$, respectively. By $\langle\cdot,\cdot\rangle$ we mean the duality pairing $H^1_{per}$-$H^{-1}_{per}$.  The symbols $\sn(\cdot,k)$,
$\dn(\cdot,k)$, and $\cn(\cdot,k)$ represent the Jacobi elliptic functions of
\emph{snoidal}, \emph{dnoidal}, and \emph{cnoidal} type, respectively. Recall
that $\sn(\cdot,k)$ is an odd function, while $\dn(\cdot,k)$, and
$\cn(\cdot,k)$ are even functions. For $k\in(0,1)$, $K(k)$ and $E(k)$ will
denote the complete elliptic integrals of the first and second type,
respectively (see e.g., \cite{friedman}).

\section{Global well-posedness}\label{section2}

\indent\indent
In this section we establish local and global well-posedness results for the periodic Cauchy  problem associated with \eqref{bbm}, namely,
\begin{equation}
\left\{\begin{array}{l}
u_t-u_{xxt}+(u+|u|^{\frac{3}{2}})_x=0, \quad x\in[0,L],\;t>0,\\
u(x,0)=u_0(x).
\end{array}
\right.
\label{bcbbm4}
\end{equation} 

 To simplify the notation, in what follows in this section, we set $L = 2\pi$. Our strategy to prove the local well-posedness of \eqref{bcbbm4}  consists in using the smoothness properties of the operator $K$, defined via its Fourier transform as
$$
\widehat{Ku}(k)=\frac{-ik}{1+k^2}\widehat{u}(k), \qquad k\in\Z,
$$
and to apply the contraction mapping principle to the equivalent integral equation, which is given by
\begin{equation}
u(x,t)=u_0(x)+\int_0^t K* (u+|u|^{\frac{3}{2}})(x,\tau)d\tau.
\label{bcbbm3}
\end{equation}

Our local well-posedness result reads as follows.

\begin{proposition}[Local well-posedness]\label{lowellp}
For each $u_0\in H^1_{per}([0,2\pi])$, there exist $T>0$ and a unique solution $u$ of \eqref{bcbbm3} such that $u\in C([0,T];H^1_{per}([0,2\pi]))$. Moreover, for every $T'\in(0,T)$, there exists a neighborhood $W$ of $u_0$ in $H^1_{per}([0,2\pi])$ such that the map data-solution 
$$\begin{array}{rl}
G: & W \longrightarrow C([0,T'];H^1_{per}([0,2\pi]))\\
& v_0 \longmapsto G(v_0) = v
\end{array}$$
is continuous.
\label{bcbbm5}
\end{proposition}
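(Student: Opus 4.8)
The plan is to run the classical Picard iteration on the integral formulation \eqref{bcbbm3}, taking full advantage of the fact that $K$ gains one derivative. Indeed, since
$$
\sqrt{1+k^2}\cdot\frac{|k|}{1+k^2}=\frac{|k|}{\sqrt{1+k^2}}\le 1,\qquad k\in\Z,
$$
the multiplier $-ik/(1+k^2)$ defines a bounded operator $K\colon L^2_{per}([0,2\pi])\to H^1_{per}([0,2\pi])$ (more generally $K\colon H^s_{per}\to H^{s+1}_{per}$), with $\|Ku\|_{H^1_{per}}\le\|u\|$. Hence it suffices to control the nonlinearity $u+|u|^{3/2}$ only in $L^2_{per}$, and this is where the Sobolev embedding $H^1_{per}([0,2\pi])\hookrightarrow L^\infty_{per}([0,2\pi])$ enters: for $u\in H^1_{per}$ one has $|u|^{3/2}\in L^\infty_{per}\subset L^2_{per}$ with $\||u|^{3/2}\|\le(2\pi)^{1/2}\|u\|_{L^\infty_{per}}^{3/2}\lesssim\|u\|_{H^1_{per}}^{3/2}$.

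Next I would establish the basic Lipschitz estimate for the nonlinear term. Writing $f(s)=|s|^{3/2}$, the elementary bound $|f(a)-f(b)|\le\tfrac32\max\{|a|,|b|\}^{1/2}|a-b|$ yields, for $u,v\in H^1_{per}$,
$$
\big\||u|^{3/2}-|v|^{3/2}\big\|\le\tfrac32\big(\|u\|_{L^\infty_{per}}^{1/2}+\|v\|_{L^\infty_{per}}^{1/2}\big)\|u-v\|\le C\big(\|u\|_{H^1_{per}}^{1/2}+\|v\|_{H^1_{per}}^{1/2}\big)\|u-v\|_{H^1_{per}}.
$$
Combining this with the boundedness of $K$ from $L^2_{per}$ to $H^1_{per}$, the map $\Psi(u)(t):=u_0+\int_0^tK*(u+|u|^{3/2})(\cdot,\tau)\,d\tau$ is well defined and continuous on the Banach space $X_T:=C([0,T];H^1_{per}([0,2\pi]))$ with its sup-in-time norm, and satisfies
$$
\|\Psi(u)\|_{X_T}\le\|u_0\|_{H^1_{per}}+C\,T\big(\|u\|_{X_T}+\|u\|_{X_T}^{3/2}\big),\qquad
\|\Psi(u)-\Psi(v)\|_{X_T}\le C\,T\big(1+\|u\|_{X_T}^{1/2}+\|v\|_{X_T}^{1/2}\big)\|u-v\|_{X_T}.
$$
Setting $M:=2\|u_0\|_{H^1_{per}}$ and choosing $T=T(\|u_0\|_{H^1_{per}})>0$ small enough, $\Psi$ becomes a contraction of the closed ball $\{u\in X_T:\|u\|_{X_T}\le M\}$ into itself, and the Banach fixed point theorem produces a unique fixed point $u\in X_T$, i.e. a solution of \eqref{bcbbm3}. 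Uniqueness in the whole class $C([0,T];H^1_{per})$ (not merely in the ball) follows from the same Lipschitz estimate together with Gronwall's inequality applied to $t\mapsto\|u(t)-\tilde u(t)\|_{H^1_{per}}$.

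For the continuity of the data-to-solution map I would argue in the standard way. Given $T'\in(0,T)$, one picks a neighborhood $W$ of $u_0$ in $H^1_{per}$ small enough that all corresponding solutions exist on $[0,T']$ and stay in a common ball of $X_{T'}$; this is legitimate because the existence time above depends only on an upper bound for the $H^1_{per}$-norm of the datum. For $v_0,w_0\in W$ with solutions $v,w$, subtracting the integral equations and invoking the Lipschitz estimate gives
$$
\|v-w\|_{X_{T'}}\le\|v_0-w_0\|_{H^1_{per}}+C\,T'\big(1+\|v\|_{X_{T'}}^{1/2}+\|w\|_{X_{T'}}^{1/2}\big)\|v-w\|_{X_{T'}},
$$
and, after splitting $[0,T']$ into finitely many subintervals of length comparable to the contraction time (so the last term can be absorbed on the left on each piece and then patched together), one concludes $\|v-w\|_{X_{T'}}\le C\|v_0-w_0\|_{H^1_{per}}$, hence $G$ is (Lipschitz) continuous. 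The only genuinely non-routine point is the treatment of the non-smooth nonlinearity $u\mapsto|u|^{3/2}$: since $f'(s)=\tfrac32|s|^{1/2}\,\sgn(s)$ is merely $\tfrac12$-H\"older, one should not expect a clean Lipschitz bound for this map directly in $H^1_{per}$, and the device that saves the argument is precisely the one-derivative smoothing of $K$, which lets all nonlinear estimates be carried out in $L^2_{per}$, where the elementary inequality above applies. Everything else is the textbook contraction scheme.
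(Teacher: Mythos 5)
Your proposal is correct and follows essentially the same route as the paper: the Duhamel formulation \eqref{bcbbm3}, the one-derivative smoothing of $K$ combined with the Sobolev embedding $H^1_{per}\hookrightarrow L^\infty_{per}$ to control $|u|^{3/2}$ and its differences, a contraction on a ball of $C([0,T];H^1_{per})$ with $T$ depending only on $\|u_0\|_{H^1_{per}}$, Gronwall's inequality for uniqueness, and lower semicontinuity of the existence time for the continuous dependence. The only cosmetic difference is that you carry out all nonlinear estimates at the $L^2_{per}$ level, whereas the paper also checks $\||w|^{3/2}\|_{H^1_{per}}^2\le C_0\|w\|_{H^1_{per}}^3$ directly for the self-mapping bound; both versions work.
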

\begin{proof} Let $T>0$ be a fixed time, to be chosen later, and define the space
$$X_T:=C([0,T];H^1_{per}([0,2\pi])),$$
endowed with the norm
 $\ds\|u\|_{X_T}=\sup_{t\in [0,T]}\|u(t)\|_{H^1_{per}}$.
Also, define the operator $\mathcal{A}$ by 
$$\left\{\begin{array}{lcll}
\mathcal{A}:& X_T & \longrightarrow & X_T\\
& \ds u& \ds \longmapsto &\ds \mathcal{A}(u)= u_0 +\int_0^t K * (u+|u|^{\frac{3}{2}})d\tau.\\
\end{array}
\right.
$$

\noindent \textbf{Local Existence:} In order to apply the contraction mapping principle, we first prove there are $r_0>0$ and $T>0$ such that  $\mathcal{A}:B_{r_0}\longrightarrow B_{r_0}$ is a contraction. Here $B_{r_0}$ denotes the closed ball in $X_T$ centered at the origin with radius $r_0$.  Note that,
if $w\in{H^1_{per}([0,2\pi])}$ then  
\begin{equation}
\|K*w\|_{H^1_{per}}^2=\ds 2\pi \sum_{-\infty}^{+\infty} (1+k^2)\frac{k^2}{(1+k^2)^2}|\widehat{w}(k)|^2 \leq 2\pi \sum_{-\infty}^{+\infty} |\widehat{w}(k)|^2=\|w\|_{L^2_{per}}^2 
\label{bcbbm7}
\end{equation}
and
\begin{equation}
\|K*w\|_{H^1_{per}}^2=\ds 2\pi \sum_{-\infty}^{+\infty} (1+k^2)\frac{k^2}{(1+k^2)^2}|\widehat{w}(k)|^2 \leq 2\pi \sum_{-\infty}^{+\infty}(1+k^2) |\widehat{w}(k)|^2=\|w\|_{H^1_{per}}^2.
\label{bcbbm7.1}
\end{equation}
In addition, $|w|^{\frac{3}{2}}\in H^1_{per}([0,2\pi])$ and there exists $C_0>0$ such that
\begin{equation}
\||w|^{\frac{3}{2}}\|^2_{H^1_{per}} =  \||w|^{\frac{3}{2}}\|^2_{L^2_{per}}+\frac{9}{4}\||w|^\frac{1}{2}w'\|^2_{L^2_{per}}\leq C_0 \|w\|^3_{H^1_{per}},
\label{bcbbm8}
\end{equation}
where we used  Sobolev's embedding in the last inequality.
Therefore,  it follows from  \eqref{bcbbm7.1} and \eqref{bcbbm8} that for any  $u_0\in H^1_{per}([0,2\pi])$ and $t\in[0,T]$,
\begin{equation}\label{bcbbm9}
\begin{split}
\|\mathcal{A}(u)\|_{H^1_{per}}&  \leq \|u_0\|_{H^1_{per}}+ \int_0^t \|K*(u+|u|^{\frac{3}{2}})\|_{H^1_{per}}d\tau \\
& \leq  \|u_0\|_{H^1_{per}}+ T\left( \|u\|_{X_T}+ C_0\|u\|^{\frac{3}{2}}_{X_T}\right).
\end{split}
\end{equation}

On the other hand,  \eqref{bcbbm7} and the Mean Value Theorem implies the existence of $\theta\in(0,1)$ such that, for all $u, v \in X_T$,
$$
\|K* (|u|^{\frac{3}{2}}-|v|^{\frac{3}{2}})\|^2_{H^1_{per}} \ds \leq \frac{9}{4}\| |\theta v +(1-\theta)u|^\frac{1}{2}(u-v)\|^2_{L^2_{per}} \leq C_1(\|v\|_{X_T}+\|u\|_{X_T})\|u-v\|^2_{H^1_{per}},
$$
that is, there exists $C_1>0$ such that for all $u, v \in X_T$,
\begin{equation}
\|K* (|u|^{\frac{3}{2}}-|v|^{\frac{3}{2}})\|_{H^1_{per}}\leq C_1(\|v\|_{X_T}+\|u\|_{X_T})^\frac{1}{2}\|u-v\|_{H^1_{per}}.
\label{bcbbm10}
\end{equation}
Thus, by using  \eqref{bcbbm10} and arguing as in \eqref{bcbbm9}, we deduce
\begin{equation}
\|\mathcal{A}u-\mathcal{A}v\|_{X_T}\leq  T\|u-v\|_{X_T}\left(1+C_1(\|v\|_{X_T}+\|u\|_{X_T})^\frac{1}{2}\right).
\label{bcbbm11}
\end{equation}
Finally, if $u,  v \in B_{r_0}$, we have from
\eqref{bcbbm9} and \eqref{bcbbm11},
$$\|\mathcal{A}u\|_{X_T}\leq \|u_0\|_{H^1_{per}}+ T\left( r_0+ C_0r_0^{\frac{3}{2}}\right) \quad \mbox{and}\quad \|\mathcal{A}u-\mathcal{A}v\|_{X_T}\leq  T\|u-v\|_{X_T}\left(1+C_1(2r_0)^\frac{1}{2}\right).
$$
By choosing  $r_0:=2\|u_0\|_{H^1_{per}}$ and $\ds T:=\frac{1}{2}(1+\tilde{C}\sqrt{r_0})^{-1}$, where $\tilde{C}=\max\left\{ C_0,\sqrt{2}C_1\right\}$, we get
$$\|\mathcal{A}u\|_{X_T}\leq \frac{r_0}{2}+ Tr_0\left( 1+ C_0r_0^{\frac{1}{2}}\right)\leq \frac{r_0}{2}+ r_0\frac{1}{2}(1+C_0\sqrt{r_0})^{-1}\left( 1+ C_0r_0^{\frac{1}{2}}\right)=\frac{r_0}{2}+\frac{r_0}{2}=r_0
$$
and
$$
\|\mathcal{A}u-\mathcal{A}v\|_{X_T}\leq  T\|u-v\|_{X_T}\left(1+C_1(2r_0)^\frac{1}{2}\right)\leq \frac{1}{2}\frac{\left(1+C_1(2r_0)^\frac{1}{2}\right)}{(1+C_1\sqrt{2r_0})}\|u-v\|_{X_T}=\frac{1}{2}\|u-v\|_{X_T}.
$$
It follows from the last two inequalities that $\mathcal{A}: B_{r_0}\longrightarrow B_{r_0}$ is well-defined and is a contraction. Consequently, there exists a unique $u\in B_{r_0}$ such that  $\mathcal{A}u(t)=u(t)$, for all $t\in [0,T]$. \\

\noindent \textbf{Uniqueness:} Suppose  $u$ and  $v$ are two solutions of  \eqref{bcbbm3}, defined on a common interval, say, $[0,T]$, with initial data $u_0$ and $v_0$, respectively. Arguing as before,   we obtain
$$
\|u(t)-v(t)\|_{H^1_{per}}
\ds \leq\|u_0-v_0\|_{H^1_{per}} + \left(1+C_1(\|v\|_{X_T}+\|u\|_{X_T})^\frac{1}{2}\right)\int_0^t \|u(\tau)-v(\tau)\|_{H^1_{per}}d\tau.
$$
Therefore, from Gronwall's inequality, we see that
\begin{equation}\|u(t)-v(t)\|_{H^1_{per}}\leq \|u_0-v_0\|_{H^1_{per}}\mbox{e}^{\left(1+C_1(\|v\|_{X_T}+\|u\|_{X_T})^\frac{1}{2}\right)t},
\label{bcbbm12}
\end{equation}
for all $t\in[0,T]$. This implies that $u(t)=v(t)$, $t\in[0,T]$, inasmuch as $u_0=v_0$.\\

\noindent \textbf{Continuous Dependence:} Next we prove  that for any $T'\in(0,T)$ there exists a neighborhood $W$ of $u_0$ in $H^1_{per}([0,2\pi])$ such that the map $G$ that associates to each $v_0\in W$ the solution of  \eqref{bcbbm3} with $v_0$ instead of $u_0$ is continuous.

In order  to see that $G$ is well defined, it is needed to show the mapping  $v_0 \mapsto T_{v_0}$ is lower semi-continuous at $u_0$, where  $T_{v_0}:=T(v_0)$ is the existence time of the solution with initial data  $v_0$. In other words, we have to prove there exists a neighborhood  $W$ of $u_0$, so that for every $v_0\in W$, the solution $v$ with initial data $v_0$ exists for all  $t\in[0,T']$. 
 
With this in mind, we define $W$ as
$$W:= \left\{ v_0 \in H^1_{per}([0,2\pi]); \|v_0-u_0\|_{H^1_{per}}<\delta, \quad \mbox{with} \,\ 0<\delta< \frac{1}{8\tilde{C}^2}\left( \frac{1}{T'}-\frac{1}{T}\right)^2 \right\},$$
where $\tilde{C}$ is defined as before. Note that, if $v_0\in W$, then
$$\|v_0\|_{H^1_{per}}\leq \|v_0-u_0\|_{H^1_{per}}+\|u_0\|_{H^1_{per}} \leq \delta + \|u_0\|_{H^1_{per}}.$$
Let $s_0:=2\|v_0\|_{H^1_{per}}$. From the local existence, we have
$$
\begin{array}{ll}
\ds\frac{1}{2T_{v_0}}=1+\tilde{C}\sqrt{s_0}&\leq 1+ \tilde{C}\sqrt{2\delta + 2 \|u_0\|_{H^1_{per}}} \leq 1+ \tilde{C}\sqrt{2\delta}+\tilde{C}\sqrt{2\|u_0\|_{H^1_{per}}}\\
\\
&= \ds 1+\tilde{C}\sqrt{r_0}+\tilde{C}\sqrt{2\delta} < \frac{1}{2T} + \sqrt{2}\tilde{C}\sqrt{\frac{1}{8\tilde{C}^2}\left( \frac{1}{T'}-\frac{1}{T}\right)^2}\\
\\
&\ds \leq \frac{1}{2T} + \sqrt{2}\tilde{C}\frac{1}{2\sqrt{2}\tilde{C}}\left( \frac{1}{T'}-\frac{1}{T}\right)=\frac{1}{2T'}.
\end{array}
$$
Therefore, the existence time $T_{v_0}$ must satisfy $T_{v_0}> T'$
and the function $G$ is well defined. The continuity of $G$ follows from \eqref{bcbbm12}.
The proof of the proposition is thus completed.
\end{proof}

We now can state the main result of this section.

\begin{theorem}[Global well-posedness]\label{gwellp}
For every $u_0\in H^1_{per}([0,2\pi])$,  the solution $u$ obtained in Proposition \ref{lowellp} can be extended for all $t\geq0$. In particular  $u\in C([0,\infty);H^1_{per}([0,\pi]))$.
\end{theorem}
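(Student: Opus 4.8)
The plan is to use the standard ODE-extension argument: the solution constructed in Proposition \ref{lowellp} is defined on a maximal interval $[0,T_{\max})$, and if $T_{\max}<\infty$ then necessarily $\limsup_{t\to T_{\max}^-}\|u(t)\|_{H^1_{per}}=\infty$. Hence it suffices to produce an a priori bound on $\|u(t)\|_{H^1_{per}}$ that is uniform on bounded time intervals; such a bound contradicts blow-up and forces $T_{\max}=\infty$. The natural source of such a bound is the conservation of the functionals $Q$ and $E$ from \eqref{quantitiesconserved} (equivalently of $F_{(c,A)}$), which are formally conserved along \eqref{bbm}.

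First I would verify conservation of $Q(u)=\tfrac12\int_0^{2\pi}(u^2+u_x^2)\dif x$. Multiplying \eqref{bbm} by $u$, integrating over $[0,2\pi]$, and integrating by parts using periodicity, the term $\int u\,\partial_x(u+|u|^{3/2})$ vanishes (it is the integral of a perfect derivative, since $u\,\partial_x g(u)=\partial_x G(u)$ for a primitive $G$), while $\int u(u_t-u_{xxt})\dif x=\tfrac{d}{dt}Q(u)$; thus $\tfrac{d}{dt}Q(u(t))=0$. Since $\|u(t)\|_{H^1_{per}}^2=2Q(u(t))=2Q(u_0)$, this already gives a time-independent $H^1_{per}$ bound. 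Strictly speaking these manipulations should first be justified at the level of the regularity $u\in C([0,T];H^1_{per})$ — multiplication of the integral equation \eqref{bcbbm3} by $u$ and differentiation in $t$ is legitimate because $K*(u+|u|^{3/2})\in C([0,T];H^1_{per})$ by \eqref{bcbbm7.1} and \eqref{bcbbm8}, so $u_t\in C([0,T];H^1_{per})$ and all pairings below make sense; alternatively one regularizes $u_0$, derives the identity for smooth solutions, and passes to the limit using the continuous dependence in Proposition \ref{lowellp}.

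With the bound $\|u(t)\|_{H^1_{per}}=\|u_0\|_{H^1_{per}}$ in hand, the conclusion is immediate: on the maximal interval $[0,T_{\max})$ the $H^1_{per}$-norm stays bounded by $\|u_0\|_{H^1_{per}}$, so the blow-up alternative is impossible and $T_{\max}=\infty$. One can phrase this cleanly by reapplying the local theory: at any time $t_0$ the solution can be continued on $[t_0,t_0+T_*]$ with $T_*$ depending only on $\|u(t_0)\|_{H^1_{per}}=\|u_0\|_{H^1_{per}}$, a fixed quantity; iterating covers all of $[0,\infty)$, and uniqueness glues the pieces into a single solution in $C([0,\infty);H^1_{per}([0,2\pi]))$ (the statement's "$H^1_{per}([0,\pi])$" being a typo for $[0,2\pi]$).

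I do not expect a genuine obstacle here — unlike the KdV-type equations, the BBM regularization is nice, because the $-u_{xxt}$ term makes $Q$ control the \emph{full} $H^1_{per}$-norm rather than just $\|u_x\|_{L^2}$, so no Gagliardo–Nirenberg interpolation or use of $E$ is needed and the a priori bound is in fact an equality. The only point requiring a little care is the rigorous justification of the energy identity at the energy-space regularity; the cleanest route is the one indicated above, differentiating the integral equation directly since it shows $u\in C^1([0,T];H^1_{per})$, after which $\tfrac{d}{dt}Q(u)=\langle u_t,u-u_{xx}\rangle=-\langle\partial_x(u+|u|^{3/2}),u\rangle=0$ is transparent.
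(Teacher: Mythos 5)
Your proposal is correct and follows essentially the same route as the paper: both arguments rest on the conservation of $Q$, which equals $\tfrac12\|u(t)\|_{H^1_{per}}^2$, together with the fact that the local existence time in Proposition \ref{lowellp} depends only on $\|u_0\|_{H^1_{per}}$, so the solution can be continued on intervals of a fixed length (the paper phrases this as an explicit gluing of integral equations rather than via the blow-up alternative, but the content is identical). Your additional justification of the identity $\tfrac{d}{dt}Q(u)=0$ at the $C([0,T];H^1_{per})$ regularity level, by differentiating the integral equation, is a welcome detail that the paper leaves implicit.
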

\begin{proof} The proof is based on an iterative process taking into account  the conservation of the quantity $Q$ in \eqref{quantitiesconserved}.  Define $v_0=u(T)$ and consider the integral equation
\begin{equation}
v(t)=v_0+\int_0^t K*(v+|v|^{\frac{3}{2}})(\tau)d\tau.
\label{bcbbm13}
\end{equation}
From Proposition \ref{lowellp}, it follows that there exist  $\tilde{T}>0$  and a unique solution  $v$ for  \eqref{bcbbm13} satisfying $v \in C([0,\tilde{T}];H^1_{per})$. By defining 
$$w(t)= \left\{ \begin{array}{l}
\ds u(t), \,\ \mbox{if}\,\ 0\leq t \leq T,\\
\ds v(t-T), \,\ \mbox{if}\,\ T\leq t \leq T+\tilde{T},
\end{array}\right.
$$
we see that, for all $T< s \leq \tilde{T}$,
$$\begin{array}{rl}
w(T+s)&  = v(s)\\
&=\,\  \ds v_0+\int_0^s K * (v+|v|^{\frac{3}{2}})(\tau)d\tau\\
&=\,\ \ds  u_0 + u(T) - u_0 + \int_0^s K * (v+|v|^{\frac{3}{2}})(\tau)d\tau .\\
\end{array}$$
By making the change of variable $\xi-T=\tau$, we obtain
$$\begin{array}{rl}
w(T+s)&=\ds  u_0 + u(T) - u_0 +\int_T^{T+s} K * (v+|v|^{\frac{3}{2}})(\xi-T)d\xi\\
& \ds = \,\ u_0 + \int_0^T K * (u+|u|^{\frac{3}{2}})(\tau)d\tau + \int_T^{T+s} K * (v+|v|^{\frac{3}{2}})(\xi-T)d\xi\\
&\ds =\,\  u_0 + \int_0^{T+s} K * (w+|w|^{\frac{3}{2}})( \tau)d\tau,
\end{array}
$$
for all $T< s \leq \tilde{T}$. This implies that $u$ can be extended to the interval $[0,T+\tilde{T}]$ and
$$
2\tilde{T}=\frac{1}{1+\tilde{C}\sqrt{2\|v_0\|_{H^1_{per}}}}=\frac{1}{1+\tilde{C}\sqrt{2\|u(T)\|_{H^1_{per}}}}=\frac{1}{1+\tilde{C}\sqrt{2\|u_0\|_{H^1_{per}}}}=2T,
$$
where we used the conserved quantity in \eqref{quantitiesconserved}.
Repeating this process we get $u\in C([0,\infty),H^1_{per}([0,2\pi]))$. This completes the proof of the theorem.
\end{proof}

\section{Existence of an explicit curve of periodic solutions}\label{section3}

 Our goal in this section is to explicit a smooth curve of periodic solution for \eqref{el} with $A\neq0$. The plan to accomplish this consists in using the well known \textit{quadrature method}. Multiplying \eqref{el} by $\phi'$ and integrating once, we get 
\begin{equation}
\frac{(1-c)}{2}\phi^2+\frac{c}{2}(\phi ')^2+\frac{2}{5}\phi^{\frac{5}{2}}-A\phi =B,
\label{eqquad}
\end{equation}
where $B$ is an integration constant. Throughout our construction, we assume $B=0$. Since we are interested in positive solutions, we change  variables by defining $\psi$ through the relation $\phi=\psi^2$. So, $\psi$ must satisfy 
\begin{equation}(\psi ')^2=\frac{1}{5c}\left(-\psi^3+\frac{5(c-1)}{4}\psi^2+\frac{5A}{2}\right)=\frac{1}{5c} P(\psi),
\label{eqpoli}
\end{equation}
where $P$ is the polynomial $P(x)=-x^3+\frac{5(c-1)}{4}x^2+\frac{5A}{2}$.

Equation \eqref{eqpoli} is very similar to that when we look for periodic traveling waves to the KdV equation (see e.g., \cite{angulo1}). Indeed, since we look for a particular solution of \eqref{eqpoli}, we assume that $P$ has three real roots, say, $\beta_1$, $\beta_2$ and $\beta_1$, satisfying $\beta_1<0<\beta_2<\beta_3$, in such a way we can write  $P(\psi)=(\psi-\beta_1)(\psi-\beta_2)(\beta_3-\psi)$. Thus, it must be the case that
\begin{equation}\left\{\begin{array}{l}
\ds \beta_1+\beta_2+\beta_3=\frac{5(c-1)}{4},\\
\ds \beta_1\beta_2+\beta_1\beta_3+\beta_2\beta_3=0,\\
\ds \beta_1\beta_2\beta_3=\frac{5A}{2},\\
\end{array}\right.
\label{sistemraizes}
\end{equation}
and $\psi$ satisfies
\begin{equation}\label{quadpsi}
(\psi')^2=\frac{1}{5c}(\psi-\beta_1)(\psi-\beta_2)(\beta_3-\psi).
\end{equation}

By using standard properties of the elliptic functions and arguing as in \cite{angulo1}, we see that \eqref{quadpsi} possesses a \textit{cnoidal-type solution}, namely,
\begin{equation}\label{cnoidalsol}
\psi(z)=\beta_2+(\beta_3-\beta_2)\cn^2\left(\sqrt{\frac{\beta_3-\beta_1}{16( \beta_1 +\beta_2+\beta_3)+20}} \,\ z,k\right),
\end{equation}
where the elliptic modulus is defined by
\begin{equation}\label{ellipk}
k^2:=\frac{\beta_3-\beta_2}{\beta_3-\beta_1}.
\end{equation}

\begin{remark}\label{remB}
In order to ensure the existence of periodic solutions for \eqref{eqquad}, it suffices to assume that 
$$
H(\phi)=\frac{(1-c)}{2}\phi^2+\frac{2}{5c}\phi^{\frac{5}{2}}-A\phi
$$
has a local minimum (see e.g., \cite{jack}). Thus, for a suitable choice of the parameters $c,A$, and $B$, we can also obtain periodic solutions. The main reason why we choose $B=0$ and find the solution in \eqref{cnoidalsol} lies on the fact that we can establish the spectral  properties of the linearized operator in a simple way. As we will see below, such operator plays a crucial role in the analysis of orbital stability. When one deals with more general solutions, depending on $(c,A,B)$, it is a difficult task to determine the non-positive spectrum of the linearized operator. See also our Remark 5.8.
\end{remark}

The next lemma provides useful informations on the solutions of system \eqref{sistemraizes} in the situation we need. 

\begin{lemma}\label{lemma1}
If the system 
\begin{equation} \label{sys3.34}
\begin{cases}
\beta_1+\beta_2+\beta_3=\frac{5(c-1)}{4},\\
\beta_1\beta_2+\beta_1\beta_3+\beta_2\beta_3=0,
\end{cases}
\end{equation}
possesses a solution with $\beta_1<0<\beta_2<\beta_3$, then
$$
c-1>0 \quad \mbox{and} \quad 0<\beta_2<\frac{5(c-1)}{6}<\beta_3<\frac{5(c-1)}{4}.
$$
Conversely, if $c>1$ is a fixed constant and $\beta_2$ satisfies
$$
0<\beta_2<\frac{5(c-1)}{6},
$$
then, defining
$$
\beta_3:=\frac{1}{2}\left( \frac{5(c-1)}{4}-\beta_2\right)+\frac{1}{2}\sqrt{\Delta_c(\beta_2)},
$$
and
$$
\beta_1:=\frac{5(c-1)}{4}-\beta_2-\beta_3,
$$
where
$$
\Delta_c(x)=\left(x-\frac{5(c-1)}{4}\right)^2-4\left(x^2-\frac{5(c-1)}{4}x\right)^2,
$$
it follows that $(\beta,\beta_2,\beta_3)$ is a solution of \eqref{sys3.34}  satisfying  $\beta_1<0<\beta_2<\beta_3$.
\end{lemma}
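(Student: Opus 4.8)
The plan is to eliminate one unknown and reduce both implications to elementary quadratic inequalities. Throughout write $s:=\tfrac{5(c-1)}{4}$, so that \eqref{sys3.34} becomes $\beta_1+\beta_2+\beta_3=s$ and $\beta_1\beta_2+\beta_1\beta_3+\beta_2\beta_3=0$.

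For the forward implication I would start from the second equation: since $\beta_2+\beta_3>0$ it can be solved for $\beta_1=-\beta_2\beta_3/(\beta_2+\beta_3)$, and substituting into the first gives the single identity
$$s=\beta_2+\beta_3-\frac{\beta_2\beta_3}{\beta_2+\beta_3}=\frac{\beta_2^2+\beta_2\beta_3+\beta_3^2}{\beta_2+\beta_3}.$$
The right-hand side is strictly positive, which yields $c-1>0$ at once. The three remaining inequalities then follow by clearing the (positive) denominator $\beta_2+\beta_3$ and factoring: $\beta_2<\tfrac{2s}{3}$ is equivalent to $(\beta_3-\beta_2)(2\beta_3+\beta_2)>0$, and $\beta_3>\tfrac{2s}{3}$ is equivalent to $(\beta_3-\beta_2)(\beta_3+2\beta_2)>0$, while $\beta_3<s$ is immediate from $\beta_3=s-\beta_2^2/(\beta_2+\beta_3)$. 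Since $0<\beta_2<\beta_3$, all three hold, and together with the hypothesis $\beta_2>0$ this gives the claimed chain $0<\beta_2<\tfrac{5(c-1)}{6}<\beta_3<\tfrac{5(c-1)}{4}$.

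For the converse, fix $c>1$ and $0<\beta_2<\tfrac{2s}{3}$. Any pair $(\beta_1,\beta_3)$ completing $\beta_2$ to a solution must satisfy $\beta_1+\beta_3=s-\beta_2$ and, from the second equation, $\beta_1\beta_3=-\beta_2(s-\beta_2)=\beta_2^2-s\beta_2$; hence $\beta_1,\beta_3$ are the two roots of $t^2-(s-\beta_2)t+(\beta_2^2-s\beta_2)=0$, whose discriminant equals $(s-\beta_2)^2-4(\beta_2^2-s\beta_2)=\Delta_c(\beta_2)$. A short expansion gives $\Delta_c(\beta_2)=(s+3\beta_2)(s-\beta_2)$, which is positive because $0<\beta_2<\tfrac{2s}{3}<s$; so the roots are real and distinct, and choosing $\beta_3$ to be the larger one reproduces the formulas in the statement (with $\beta_1=s-\beta_2-\beta_3$ the smaller one). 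Conversely, any pair built this way does solve \eqref{sys3.34}, since $\beta_1\beta_2+\beta_1\beta_3+\beta_2\beta_3=\beta_2(s-\beta_2)+(\beta_2^2-s\beta_2)=0$. It remains to check the signs: because $s-\beta_2>0$, the inequality $\beta_1<0$ is equivalent to $\Delta_c(\beta_2)>(s-\beta_2)^2$, which reduces to $\beta_2(s-\beta_2)>0$; and $\beta_2<\beta_3$ is equivalent to $\sqrt{\Delta_c(\beta_2)}>3\beta_2-s$, which is trivial when $3\beta_2\le s$ and, after squaring, reduces to $\beta_2<\tfrac{2s}{3}$ otherwise. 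Finally $\beta_3>\beta_2>0$, so $\beta_1<0<\beta_2<\beta_3$, as desired.

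The computations are all routine; the only point requiring attention is the sign discipline when squaring an inequality between a square root and an affine expression in $\beta_2$ — this is exactly where the sharper hypothesis $\beta_2<\tfrac{5(c-1)}{6}$ (rather than merely $\beta_2<\tfrac{5(c-1)}{4}$) is needed — together with observing that each auxiliary quadratic, for instance $2(\beta_2^2+\beta_2\beta_3+\beta_3^2)-3\beta_2(\beta_2+\beta_3)=(\beta_3-\beta_2)(2\beta_3+\beta_2)$, factors with a transparent sign.
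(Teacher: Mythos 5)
Your proof is correct and complete; the paper itself omits the argument as ``straightforward algebraic calculations,'' and what you have written is exactly that routine verification, carried out via Vieta's formulas and the discriminant of the quadratic $t^2-(s-\beta_2)t+(\beta_2^2-s\beta_2)$, with the squaring step pinned down precisely by the hypothesis $\beta_2<\tfrac{2s}{3}$. One caveat: your identification of $\Delta_c(\beta_2)$ with that discriminant relies on reading the lemma's displayed formula without the outer square on the term $4\bigl(x^2-\tfrac{5(c-1)}{4}x\bigr)^2$; that square is a typo in the statement (the same quantity appears without it in \eqref{sistemack}, and only the unsquared version is consistent with \eqref{kcbeta}), so your reading is the intended one and the proof stands.
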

\begin{proof}
The proof relies on straightforward algebraic calculations, so we omit the details. 
\end{proof}

\begin{remark}\label{rem3.2}
It is clear if $\beta_1,\beta_2,\beta_3$ is a solution of \eqref{sys3.34} then it also is a solution of \eqref{sistemraizes}, inasmuch as we define $A$ to be $2\beta_1\beta_2\beta_3/5$.
\end{remark}

Note that the solution in \eqref{cnoidalsol} depends on various parameters. However, with the construction in Lemma \ref{lemma1} at hand, we can view it as a function depending only on the parameters $c$ and $\beta_2$. As a result, we obtain the following.

\begin{corollary}\label{corexis}
Let
$$
\mathcal{P}:=\left\{(c,\beta_2)\in \mathbb{R}^2;\,\ c>1\quad \mbox{and} \quad 0<\beta_2<\frac{5(c-1)}{6} \right\}.
$$
For any $(c,\beta_2)\in\mathcal{P}$, let $\beta_1,\beta_3$ be as in Lemma \ref{lemma1}. Then,  the function $\phi_{(c,\beta_2)}=\phi$ defined by
\begin{equation}\label{eq7}
\phi(z)=\left[\beta_2+(\beta_3-\beta_2)\mathrm{CN}^2\left(\sqrt{\frac{\beta_3-\beta_1}{16( \beta_1 +\beta_2+\beta_3)+20}} \,\ z;k\right)\right]^2,
\end{equation} 
with $k$ as in \eqref{ellipk}, solves \eqref{el}.
\end{corollary}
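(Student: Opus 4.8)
The plan is to verify directly that the function $\phi$ in \eqref{eq7} solves the second-order ODE \eqref{el}, by retracing the quadrature construction backwards. First I would set $\psi:=\sqrt{\phi}$, so that $\psi(z)=\beta_2+(\beta_3-\beta_2)\cn^2(\g z,k)$ with $\g:=\sqrt{(\beta_3-\beta_1)/(16(\beta_1+\beta_2+\beta_3)+20)}$, and I would record the classical derivative identities for Jacobi elliptic functions, namely $\frac{d}{dz}\cn(\g z,k)=-\g\,\sn(\g z,k)\dn(\g z,k)$, together with $\sn^2+\cn^2=1$ and $\dn^2=1-k^2\sn^2$. Using these, a direct computation shows that $\psi$ satisfies the first-order relation \eqref{quadpsi}, i.e. $(\psi')^2=\frac{1}{5c}(\psi-\beta_1)(\psi-\beta_2)(\beta_3-\psi)$; this is exactly the standard cnoidal-wave computation referenced from \cite{angulo1}, and the choice of $\g$ and of $k$ in \eqref{ellipk} is precisely what makes the scaling work out. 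Here the key algebraic input is that by Lemma \ref{lemma1} and Remark \ref{rem3.2} the triple $(\beta_1,\beta_2,\beta_3)$ solves the full system \eqref{sistemraizes} once we set $A:=2\beta_1\beta_2\beta_3/5$, and the sign condition $\beta_1<0<\beta_2<\beta_3$ guarantees $\g^2>0$ and $k^2\in(0,1)$, so that the elliptic functions are genuinely well-defined and the expression \eqref{cnoidalsol} is real and smooth.

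Next I would promote the first-order identity to the second-order ODE. Differentiating $(\psi')^2=\frac{1}{5c}P(\psi)$ with respect to $z$ and dividing by $2\psi'$ (valid away from the isolated critical points, then extended by continuity/analyticity) gives $\psi''=\frac{1}{10c}P'(\psi)=\frac{1}{10c}\bigl(-3\psi^2+\frac{5(c-1)}{2}\psi\bigr)$, that is $10c\,\psi''=-3\psi^2+\frac{5(c-1)}{2}\psi$. Then I would translate this back to $\phi=\psi^2$: from $\phi'=2\psi\psi'$ and $\phi''=2(\psi')^2+2\psi\psi''$, substitute the expressions for $(\psi')^2$ and $\psi''$ just obtained. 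After collecting terms — using $\phi^{3/2}=\psi^3$, $\phi=\psi^2$ — one should land exactly on $-c\phi''+(c-1)\phi-\phi^{3/2}+A=0$, which is \eqref{el}. The constant $A$ appears through the $\beta_1\beta_2\beta_3$ term in $P$, consistent with Remark \ref{rem3.2}, and the integration constant $B$ is $0$ by the normalization adopted right after \eqref{eqquad}.

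The only genuinely delicate points are bookkeeping ones rather than conceptual ones. First, one must confirm that $16(\beta_1+\beta_2+\beta_3)+20>0$ under the hypotheses of Lemma \ref{lemma1}: since $\beta_1+\beta_2+\beta_3=\frac{5(c-1)}{4}>0$ when $c>1$, the quantity under the square root defining $\g$ is positive, so $\g$ is real — and in fact $5c\g^2=\tfrac{5c(\beta_3-\beta_1)}{16(\beta_1+\beta_2+\beta_3)+20}$ simplifies nicely against the factor coming from $\frac{d^2}{dz^2}\cn^2$, which is the real engine of the computation. Second, since $\phi$ is built from $\cn^2$, it is automatically smooth, $L$-periodic (for the appropriate $L$ determined by $\g$ and $K(k)$), and strictly positive because $\beta_2>0$, so $\phi^{3/2}=\psi^3$ is unambiguous and the formal manipulations with the fractional power are legitimate. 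I expect the main obstacle to be purely the patience required to carry out the substitution $\phi=\psi^2$ cleanly and see all the $\beta_i$-dependent coefficients collapse into $(c-1)$ and $A$; there is no analytic subtlety, only the risk of an algebra slip, and the structure of \eqref{sistemraizes} is exactly what forces the cancellations. One can also double-check the final identity at the single point $z=0$ (where $\cn=1$, $\sn=0$, $\dn=1$) as a sanity test before trusting the general computation.
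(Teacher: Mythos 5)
Your proposal is correct and follows essentially the same route as the paper: the paper's proof is just the one-line observation that one reverses the quadrature construction via $\phi=\psi^2$, with the cnoidal ansatz for \eqref{quadpsi} taken as the standard computation from \cite{angulo1}. You have simply written out in full the steps the paper leaves implicit (verifying \eqref{quadpsi}, differentiating to recover the second-order equation, and checking the positivity/well-definedness conditions), and the algebra you outline does close up as claimed.
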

\begin{proof}
By using \eqref{cnoidalsol} and Lemma \ref{lemma1}, it suffices to recall that $\phi=\psi^2$.
\end{proof}

Next, we pay particular attention to the period of the function in \eqref{eq7}. In view of the relations in Lemma \ref{lemma1}, it is not difficult to check that  the period of $\phi$ is
\begin{equation}
T_{\phi}(c,\beta_2):= \frac{4\sqrt{5c}}{(\Delta_c(\beta_2))^\frac{1}{4}}K(k(c,\beta_2)),
\label{periodophi}
\end{equation}
where $K(k)$ is the complete elliptic integral of the first kind and
\begin{equation}\label{kcbeta}
k(c,\beta_2)=\sqrt{\frac{5(c-1)-12\beta_2}{8(\Delta_c(\beta_2))^\frac{1}{2}}+\frac{1}{2}}.
\end{equation}

In order to see that the period in \eqref{periodophi} ranges over an unbounded interval, we establish the following.

\begin{lemma}\label{deriper}
Let $T$ be the function that associates to each $(c,\beta_2)\in\mathcal{P}$ the period of the function $\phi=\phi_{(c,\beta_2)}$, that is,
$$
\begin{array}{lcll}
T:& \mathcal{P} & \longrightarrow & \mathbb{R}\\
& \ds (c,\beta_2)& \ds \longmapsto &\ds T(c,\beta_2)=T_\phi(c,\beta_2),
\end{array}
$$
where $T_\phi(c,\beta_2)$ is defined in \eqref{periodophi}. Then, for all $(c,\beta)\in \mathcal{P}$, $\frac{\partial T}{\partial\beta_2}(c,\beta_2)<0$.
\end{lemma}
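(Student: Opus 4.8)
The plan is to express $T_\phi$ entirely in terms of the variable $\beta_2$ (with $c$ fixed) and to reduce the claim to a monotonicity statement about the elliptic modulus $k$ together with the standard fact that $K$ is strictly increasing. First I would fix $c>1$ and think of $\beta_2\mapsto T_\phi(c,\beta_2)$ as a composition: $\beta_2$ determines $\beta_1,\beta_3$ via Lemma \ref{lemma1}, hence determines $\Delta_c(\beta_2)$ and $k=k(c,\beta_2)$ through \eqref{kcbeta}, and finally $T_\phi = \frac{4\sqrt{5c}}{(\Delta_c(\beta_2))^{1/4}}K(k)$. Differentiating in $\beta_2$,
\begin{equation*}
\frac{\partial T}{\partial\beta_2}=4\sqrt{5c}\left[-\frac14 \Delta_c^{-5/4}\,\Delta_c'\,K(k)+\Delta_c^{-1/4}K'(k)\,\frac{\partial k}{\partial\beta_2}\right].
\end{equation*}
The key analytic input is that $K'(k)>0$ for $k\in(0,1)$; thus if I can show $\Delta_c'(\beta_2)>0$ and $\frac{\partial k}{\partial\beta_2}<0$ on $\mathcal{P}$, both bracketed terms are negative and the conclusion follows at once.

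Next I would carry out the two sign computations. For $\Delta_c$: from the formula $\Delta_c(x)=\bigl(x-\tfrac{5(c-1)}{4}\bigr)^2-4\bigl(x^2-\tfrac{5(c-1)}{4}x\bigr)$ (I expect the squared factor in the stated $\Delta_c$ to be a typo; I would use the correct discriminant expression that makes $\beta_3$ real, or equivalently rewrite $\Delta_c$ directly in terms of the roots as $(\beta_3-\beta_1)^2$ up to the fixed factor), one checks that on the admissible range $0<\beta_2<\tfrac{5(c-1)}{6}$ the derivative $\Delta_c'(\beta_2)$ has a definite sign; since $\beta_3-\beta_1$ increases as $\beta_2$ decreases — geometrically, shrinking the middle root spreads the outer two apart — I would instead directly show $\frac{\partial}{\partial\beta_2}(\beta_3-\beta_1)<0$ using the relations $\beta_1+\beta_3=\tfrac{5(c-1)}{4}-\beta_2$ and $\beta_1\beta_3=-\beta_2\bigl(\tfrac{5(c-1)}{4}-\beta_2\bigr)$ (from the second equation of \eqref{sys3.34}), differentiating implicitly, and using $\beta_1<0<\beta_3$. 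For $k$: since $k^2=\tfrac{\beta_3-\beta_2}{\beta_3-\beta_1}$, write $k^2 = 1-\tfrac{\beta_2-\beta_1}{\beta_3-\beta_1}$, so it suffices to show $\tfrac{\beta_2-\beta_1}{\beta_3-\beta_1}$ is strictly increasing in $\beta_2$; again this is an implicit-differentiation check using $\beta_1<0<\beta_2<\beta_3$ and the monotonicity of $\beta_3-\beta_1$ just established.

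The main obstacle I anticipate is purely bookkeeping: correctly handling the implicit dependence of $\beta_1$ and $\beta_3$ on $\beta_2$ and getting the signs right on the whole region $\mathcal{P}$, rather than any deep difficulty. A clean way to sidestep messy algebra is to parametrize the two-dimensional set of admissible root-triples $(\beta_1,\beta_2,\beta_3)$ by $(c,\beta_2)$ via the two symmetric-function constraints and compute $\partial\beta_1/\partial\beta_2$, $\partial\beta_3/\partial\beta_2$ once and for all from the linear system obtained by differentiating \eqref{sys3.34}; everything else — $\Delta_c$, $k$, and finally $T_\phi$ — is then an explicit elementary function of quantities whose monotonicity is already in hand. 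I would close by noting that, combined with $K(k)\to\infty$ as $k\to1^-$ and the behavior of $\Delta_c$ at the endpoints, this monotonicity shows the period sweeps out an unbounded interval, which is exactly what is needed to extract, for a fixed period $L$, a smooth curve $k\mapsto\phi_k$ in the next step.
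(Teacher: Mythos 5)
Your starting point (differentiating $T=4\sqrt{5c}\,\Delta_c^{-1/4}K(k)$ and trying to control the two resulting terms) is fine, and your claims that $K'(k)>0$ and $\partial k/\partial\beta_2<0$ are both correct (indeed $\partial(k^2)/\partial\beta_2=-\tfrac{25(c-1)^2}{8}\Delta_c^{-3/2}<0$). But the other half of your sign argument fails: writing $\sigma=\tfrac{5(c-1)}{4}$, the correct discriminant is $\Delta_c(x)=(x-\sigma)^2-4(x^2-\sigma x)=-3x^2+2\sigma x+\sigma^2$, a downward parabola with vertex at $x=\sigma/3=\tfrac{5(c-1)}{12}$, which lies in the interior of the admissible interval $\bigl(0,\tfrac{5(c-1)}{6}\bigr)$. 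Hence $\Delta_c'$ changes sign there, and neither $\Delta_c'>0$ nor $\Delta_c'<0$ holds on all of $\mathcal{P}$. Equivalently, $\beta_3-\beta_1=\sqrt{\Delta_c}$ equals $\sigma$ at both endpoints $\beta_2=0$ and $\beta_2=2\sigma/3$ and peaks at $\tfrac{2\sigma}{\sqrt 3}$ in between, so the geometric picture ``shrinking the middle root spreads the outer two apart'' is wrong: increasing $\beta_2$ decreases the sum $\beta_1+\beta_3$ but makes the product $\beta_1\beta_3=-\beta_2(\sigma-\beta_2)$ more negative, and these effects compete. Note also that your write-up is internally inconsistent on this point: you first say you need $\Delta_c'>0$ to make the first bracketed term negative, and then propose to prove $\partial(\beta_3-\beta_1)/\partial\beta_2<0$, which is the statement $\Delta_c'<0$.

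The consequence is that on the sub-range $\beta_2>\tfrac{5(c-1)}{12}$ (equivalently $k^2<1/2$) the two terms in your bracket have opposite signs, so a term-by-term argument cannot close and a genuine cancellation must be estimated. This is exactly what the paper's proof does: substituting $\tfrac{dK}{dk}=\tfrac{E(k)-(1-k^2)K(k)}{k(1-k^2)}$ and the explicit $\partial k/\partial\beta_2$, the whole derivative collapses (via the identities \eqref{eq11}--\eqref{eq12}) to a positive multiple of the single function $s(k)=-4(k^4-k^2+1)E(k)+2(k^2-1)(k^2-2)K(k)$, whose two terms indeed have opposite signs; negativity of $s$ is then obtained from $s(0^+)=0$ and $s'(k)=10k\left[(1-2k^2)E(k)-(1-k^2)K(k)\right]<0$, the last inequality using $E(k)<K(k)$ in an essential way. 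So the missing ingredient in your proposal is not bookkeeping but the comparison $E<K$ (or some equivalent quantitative input) needed to beat the positive contribution of the $\Delta_c'$ term on half of the parameter range.
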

\begin{proof}
First notice that \eqref{kcbeta} provides the relations
\begin{equation}
16\Delta_c(\beta_2)(2k^2-1)^2=(5(c-1)-12\beta_2)^2
\label{eq11}
\end{equation}
and
\begin{equation}
2\Delta_c(\beta_2)^{\frac{1}{2}}(2k^2-1)=\frac{5(c-1)}{2}-6\beta_2.
\label{eq12}
\end{equation}
By taking the derivative of  $T$ with respect to $\beta_2$, we obtain
$$\ds\frac{\partial T}{\partial \beta_2}=\gamma_2\left[4\Delta_c(\beta_2)\frac{\partial K}{\partial k} \frac{\partial k}{\partial\beta_2}-K(k)\frac{\partial}{\partial\beta_2}\Delta_c (\beta_2)\right],$$
where $\ds\gamma_2:=\frac{\Delta_c(\beta_2)^{-\frac{3}{4}}\sqrt{5c}}{\Delta_c(\beta_2)^{ \frac{1}{2}}}$. 
By observing that 
$$
\frac{dK}{dk}=\frac{E(k)-(1-k^2)K(k)}{k(1-k^2)},\quad
\frac{\partial k}{\partial \beta_2}=\frac{1}{32\Delta_c(\beta_2)^{\frac{1}{2}}}\left[ -24\Delta_c(\beta_2)-\frac{1}{2}(5(c-1)-12\beta_2)^2 \right],
$$
and using relations (\ref{eq11}) and  (\ref{eq12}), we conclude that
$$\ds\frac{\partial T}{\partial \beta_2}=\gamma_3\left[(-4k^4+4k^2-4)E(k) +(2k^4-6k^2+4)K(k)\right]=:\gamma_3s(k),$$
with $\ds\gamma_3=\frac{\gamma_2\Delta_c(\beta_2)^{\frac{1}{2}}}{k^2(1-k^2)}>0$.

Thus, in order to prove that $\ds\frac{\partial T}{\partial \beta_2}<0$, it is enough to show that $s(k)<0$. But, since   $s(k)\rightarrow 0$, as $k\rightarrow 0$, it suffices to prove that $s'(k)<0$, for any $k\in(0,1)$. Notice that
$$s'(k)=\frac{10k^2}{k}[(1-2k^2)E(k)+(k^2-1)K(k)].$$
So, $s'(k)<0$ if and only if $(1-2k^2)E(k)+(k^2-1)K(k)<0$, which  is equivalent to $$(1-2k^2)E(k)<(1-k^2)K(k).$$
This last inequality follows because $1-2k^2<1-k^2$, $(1-k^2)>0$, $E(k)>0$, $K(k)>0$ and $E(k)<K(k)$. The proof is thus completed.
\end{proof}

Now fix any $c>1$. It follows from Lemma \ref{deriper} that the function $\beta_2\mapsto T_\phi(\beta_2):=T(c,\beta_2)$ is strictly decreasing on the interval $\left(0,\frac{5(c-1)}{6}\right)$. Moreover, \eqref{kcbeta} and \eqref{periodophi} yield 
$$
T_{\phi}(\beta_2)\longrightarrow +\infty, \quad \mbox{as}\;\;\beta_2 \longrightarrow 0,
$$
and 
$$
T_{\phi}(\beta_2)\longrightarrow 4\pi\sqrt{\frac{c}{c-1}}, \quad \mbox{as}\;\;\beta_2\longrightarrow \frac{5(c-1)}{6},
$$
where we have used that $K(0)=\pi/2$ and $K(k)\to+\infty$, as $k\to1$.
 A consequence of the above analysis is the following: fix any real number $L>4\pi$ and choose a constant $c>1$ satisfying $ 4\pi\sqrt{\frac{c}{c-1}}<L$; then there exists a unique $\beta_2\in \left(0,\frac{5(c-1)}{6}\right)$ such that $T_{\phi}(\beta_2)=L$.

Gathering all the informations above together, we can establish the main result of this section, which reads as follows.

\begin{theorem}\label{mainteoex}
Fix any number $L> 4\pi$ and choose  $c_0>\frac{L^2}{L^2-16\pi^2}\ds$. Let  $\beta_2^0:=\beta_2(c_0)\in\left(0,\frac{5(c_0-1)}{6}\right)\ds$ be the unique number  such that $T_{\phi}(\beta_2^0)=L$. Then, the following statements hold.
\begin{enumerate}
\item [(i)] There exist intervals $J(c_0)$ and $B(\beta_2^0)$ around $c_0$ and $\beta_2^0$, respectively, and a unique function $\Lambda:J(c_0)\to B(\beta_2^0)$ such that $\Lambda(c_0)=\beta_2^0$ and 
$$T_{\phi}(c,\beta_2)=\frac{4\sqrt{5c}}{(\Delta_c(\beta_2))^\frac{1}{4}}K(k) =L,$$
where $c\in J(c_0)$, $\beta_2=\Lambda(c)$, and $k^2=k^2(c,\beta_2)=k^2(c,\Lambda(c))  \in(0,1)$ is as in \eqref{kcbeta}.
\item [(ii)] Let  $\beta_2=\Lambda(c)$ and define $\beta_1=\beta_1(c)$ and $\beta_3=\beta_3(c)$ as in Lemma \ref{lemma1}. Then, the cnoidal wave  solution of $(\ref{el})$, defined in $(\ref{eq7})$, determined now by $\beta_1(c)$, $\beta_2(c)$ and $\beta_3(c)$, has fundamental period $L$ for all $c\in J(c_0)$. Moreover, the mapping
$$
c\in J(c_0)\longmapsto \phi_c:=\phi_{(c,\Lambda(c))}\in C^{\infty}_{per}([0,L]),
$$
is a smooth function.
\item [(iii)] $J(c_0)$ can be extended to $\ds\left(\frac{L^2}{L^2-16\pi^2},+\infty\right)$.
\end{enumerate}
\end{theorem}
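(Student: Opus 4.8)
The plan is to deduce (i) from the implicit function theorem, (ii) by chasing the resulting function $\Lambda$ through the explicit formulas of Section~\ref{section3}, and (iii) by upgrading the local construction using the global monotonicity of the period from Lemma~\ref{deriper}. For (i), I would first record that $(c,\beta_2)\mapsto T_{\phi}(c,\beta_2)$ in \eqref{periodophi} is of class $C^{\infty}$ on the open set $\mathcal{P}$: throughout $\mathcal{P}$ one has $\Delta_c(\beta_2)>0$ and $k^2(c,\beta_2)\in(0,1)$ (the inequalities underlying Lemma~\ref{lemma1} and \eqref{kcbeta}), so $T_{\phi}$ is a composition of the smooth maps $\beta_2\mapsto\Delta_c(\beta_2)$, $(c,\beta_2)\mapsto k(c,\beta_2)$ and $k\mapsto K(k)$ together with the analytic fourth root. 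Since $T_{\phi}(c_0,\beta_2^0)=L$ by the choice of $\beta_2^0$ and $\frac{\partial T_{\phi}}{\partial\beta_2}(c_0,\beta_2^0)<0$ by Lemma~\ref{deriper}, the implicit function theorem furnishes open intervals $J(c_0)\ni c_0$ and $B(\beta_2^0)\ni\beta_2^0$ with $J(c_0)\times B(\beta_2^0)\subset\mathcal{P}$, together with a unique $C^{\infty}$ map $\Lambda:J(c_0)\to B(\beta_2^0)$ satisfying $\Lambda(c_0)=\beta_2^0$ such that, for $(c,\beta_2)\in J(c_0)\times B(\beta_2^0)$, one has $T_{\phi}(c,\beta_2)=L$ if and only if $\beta_2=\Lambda(c)$. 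This is the content of (i).

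For (ii), given $c\in J(c_0)$ I set $\beta_2=\Lambda(c)$ and define $\beta_3(c)$ and $\beta_1(c)$ by the formulas of Lemma~\ref{lemma1}; since $\Delta_c(\Lambda(c))>0$ these are smooth functions of $c$ satisfying $\beta_1(c)<0<\beta_2(c)<\beta_3(c)$. By Corollary~\ref{corexis} the function $\phi_c:=\phi_{(c,\Lambda(c))}$ of \eqref{eq7} solves \eqref{el}, and the identity $T_{\phi}(c,\Lambda(c))=L$ says that $\phi_c$ has period $L$; this is the fundamental period, since $\cn^2(\cdot,k)$ has fundamental period $2K(k)$ and squaring the positive function $\psi$ does not shrink it. For smoothness of $c\mapsto\phi_c$ into $C^{\infty}_{per}([0,L])$, I would use that $\gamma(c):=\sqrt{(\beta_3(c)-\beta_1(c))/(16(\beta_1+\beta_2+\beta_3)+20)}=\sqrt{(\beta_3(c)-\beta_1(c))/(20c)}$ is well defined, positive and smooth, and that $(c,z)\mapsto\cn(\gamma(c)z,k(c))$ is jointly smooth and $L$-periodic in $z$; hence $c\mapsto\phi_c$ is smooth.

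For (iii), put $c_{\ast}:=\frac{L^2}{L^2-16\pi^2}$ and note that $c_{\ast}$ is the unique root of $4\pi\sqrt{c/(c-1)}=L$; since $c\mapsto 4\pi\sqrt{c/(c-1)}$ is strictly decreasing, $4\pi\sqrt{c/(c-1)}<L$ for every $c>c_{\ast}$. Fixing such a $c$, Lemma~\ref{deriper} makes $\beta_2\mapsto T_{\phi}(c,\beta_2)$ strictly decreasing on $\left(0,\frac{5(c-1)}{6}\right)$, and the limits established just before the theorem give $T_{\phi}\to+\infty$ as $\beta_2\to0^+$ and $T_{\phi}\to 4\pi\sqrt{c/(c-1)}<L$ as $\beta_2\to\frac{5(c-1)}{6}^-$; hence there is a unique $\Lambda(c)\in\left(0,\frac{5(c-1)}{6}\right)$ with $T_{\phi}(c,\Lambda(c))=L$, and this defines $\Lambda$ on all of $(c_{\ast},+\infty)$. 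To see that this global $\Lambda$ is smooth, I would apply the implicit function theorem at an arbitrary $c_1\in(c_{\ast},+\infty)$ — legitimate because $(c_1,\Lambda(c_1))\in\mathcal{P}$, $T_{\phi}(c_1,\Lambda(c_1))=L$ and $\frac{\partial T_{\phi}}{\partial\beta_2}(c_1,\Lambda(c_1))\neq0$ by Lemma~\ref{deriper} — and then use the uniqueness just proved to identify the resulting local branch with $\Lambda$ near $c_1$; the argument of (ii) then applies verbatim with $J(c_0)$ replaced by $(c_{\ast},+\infty)$. The one step requiring care is precisely this passage from the pointwise selection $\Lambda$ (produced by monotonicity and the intermediate value theorem) to its smoothness, which reuses the non-vanishing of $\partial T_{\phi}/\partial\beta_2$ now as a regularity statement and relies on global uniqueness to patch the implicit-function branches together; everything else is routine, the genuine analytical work having been carried out in Lemmas~\ref{lemma1} and \ref{deriper}.
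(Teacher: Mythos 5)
Your proposal is correct and follows essentially the same route as the paper, which itself only sketches the argument: the implicit function theorem applied at $(c_0,\beta_2^0)$ using the nonvanishing of $\partial T_\phi/\partial\beta_2$ from Lemma \ref{deriper} for (i), and the arbitrariness of $c_0$ together with the global monotonicity/uniqueness of $\Lambda(c)$ to patch the local branches for (ii) and (iii). You have merely filled in the details (smoothness of $T_\phi$ on $\mathcal{P}$, the fundamental-period check, and the identification of the local implicit-function branches with the globally defined $\Lambda$) that the paper delegates to the cited references.
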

\begin{proof}
The proof of (i) follows as an application of the implicit function theorem, taking into account Lemma \ref{deriper}. Parts (ii) and (iii) follow immediately from part (i) and the arbitrariness of $c_0$. The interested reader will find some details, for instance in \cite{angulo5}, \cite{angulo3}, or \cite{natali-pastor}.
\end{proof}

In order to make the computations  in the proof of the orbital stability easier, we reparameterize, for each $L>4\pi$ fixed, the smooth curve of periodic solutions obtained in the previous theorem as a function of the modulus $k$. To do so, we note the parameters $c$ and $\beta_2(c)$ can be viewed as functions of $k$.   In fact, let $L>4\pi$ be fixed. After some algebraic manipulations with the relations 
\begin{equation}\left\{
\begin{array}{l}
\ds L=\frac{4\sqrt{5c}}{\Delta_c(\beta_2(c))^{\frac{1}{4}}}K(k),\\
\\
\ds k^2=\frac{5(c-1)-12\beta_2(c)}{8\Delta_c(\beta_2(c))^{\frac{1}{2}}}+\frac{1}{2},\\
\\
\ds \Delta_c(\beta_2(c))=\left(\beta_2(c)-\frac{5(c-1)}{4}\right)^2-4\left(\beta_2^2(c)- \frac{5(c-1)}{4}\beta_2(c)\right),
\end{array}\right.
\label{sistemack}
\end{equation}
given in Theorem \ref{mainteoex}, we obtain
\begin{equation}
(L^2-64 K^2(k)\sqrt{k^4-k^2+1})c=L^2.
\label{ck1}
\end{equation}
Observe that the function $k\in(0,1)\mapsto g(k):=64 K^2(k)\sqrt{k^4-k^2+1}$ is strictly increasing, $g(0)=16\pi^2$ and $g(k)\to+\infty$, as $k\to1$. Thus, there exists a unique $k=k_L$ satisfying $g(k_L)=L^2$. In view of \eqref{ck1},
\begin{equation}
c=\frac{L^2}{L^2-64 K^2(k)\sqrt{k^4-k^2+1}}, \qquad k\in(0,k_L).
\label{ck}
\end{equation}
In a similar fashion, we deduce that
\begin{equation}
\ds\beta_2=\left[\frac{L^2-64(2k^2-1)K^2(k)}{L^2-64K^2(k)\sqrt{ k^4-k^2+1}} -1\right], \quad k\in(0,k_L).
\label{beta2kl}
\end{equation}
Also, recalling that we have defined the constant $A$ to be $2\beta_1\beta_2\beta_3/5$, we have
\begin{equation}
A=\frac{-204800K^6(k)}{27\tilde{m}^3(k)}\left[\left( \sqrt{k^4-k^2+1}-(2k^2-1)\right)^2\left(2\sqrt{k^4-k^2+1}+(2k^2-1)\right)\right],
\label{AkL}
\end{equation}
where $\tilde{m}(k)=L^2-64K^2(k) \sqrt{k^4-k^2+1}$.

The existence of the smooth curve of periodic traveling waves depending on the elliptic modulus $k$ is stated next for the sake of future references.

\begin{corollary}\label{lascor}
Fix $L>4\pi$ and let $k_L$ be defined as above. For each $k\in(0,k_L)$, the function
\begin{equation}
\phi_{k}(x)=\left[\!\frac{5}{12}\!\left(\!\frac{L^2\!-\!64(2k^2\!-\!1)K^2(k)}{\tilde{m}(k)}-1\! \right)\!+\!\frac{80k^2K^2(k)}{\tilde{m}(k)}\mathrm{CN}^2 \left(\!\frac{2K(k)}{L} x,k\!\right)\! \right]^2,
\label{phikL}
\end{equation}
where $\tilde{m}(k)=L^2-64K^2(k) \sqrt{k^4-k^2+1}$, is a smooth $L$-periodic solution of \eqref{el}. In addition, the mapping
$$
k\in(0,k_L)\mapsto \phi_{k}\in C^\infty_{per}([0,L])
$$ 
is smooth.
\end{corollary}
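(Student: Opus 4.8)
The statement is a reparametrization of Theorem~\ref{mainteoex}, so the plan is to pass from the speed parameter $c$ to the elliptic modulus $k$ and then recognize the resulting curve as the explicit function in \eqref{phikL}. First I would record that the relation \eqref{ck1} forces $c$ and $k$ to be tied by \eqref{ck}, and that the map $k\mapsto c(k)$ so defined is smooth on $(0,k_L)$ with image contained in $\left(\frac{L^2}{L^2-16\pi^2},+\infty\right)$: indeed, as observed just before \eqref{ck}, the function $g(k)=64K^2(k)\sqrt{k^4-k^2+1}$ is strictly increasing with $g(0^+)=16\pi^2$, $g(k)\to+\infty$ as $k\to1^-$, and $g(k_L)=L^2$, so $L^2-g(k)\in(0,L^2-16\pi^2)$ on $(0,k_L)$, while $K$, $E$ (hence $g$) are smooth there. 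In particular $c(k)>1$, and the pair $(c(k),\beta_2(k))$, with $\beta_2(k)$ given by \eqref{beta2kl}, is exactly the one singled out by the period condition $T_\phi(c(k),\beta_2(k))=L$; by Lemma~\ref{lemma1} and the discussion following Lemma~\ref{deriper} it satisfies $0<\beta_2(k)<\frac{5(c(k)-1)}{6}$, so it belongs to the set $\mathcal P$ of Corollary~\ref{corexis}, and the associated roots obey $\beta_1(k)<0<\beta_2(k)<\beta_3(k)$.

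Next I would invoke Theorem~\ref{mainteoex}(ii)--(iii): on $\left(\frac{L^2}{L^2-16\pi^2},+\infty\right)$ the assignment $c\mapsto\phi_c=\phi_{(c,\Lambda(c))}$ is a smooth curve in $C^\infty_{per}([0,L])$ of $L$-periodic solutions of \eqref{el}. Composing it with the smooth map $k\mapsto c(k)$ yields a smooth curve $k\in(0,k_L)\mapsto\phi_{c(k)}\in C^\infty_{per}([0,L])$, each member of which is an $L$-periodic solution of \eqref{el}; alternatively, once \eqref{phikL} is established the smoothness can be read off directly from that formula, since $K(k)$ and $E(k)$ are smooth on $(0,1)$, $\tilde{m}(k)\neq0$ on $(0,k_L)$, and $\mathrm{CN}$ depends smoothly on its arguments.

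It then remains to identify $\phi_{c(k)}$ with the right-hand side of \eqref{phikL}. Starting from \eqref{eq7} with $\beta_1,\beta_2,\beta_3$ as in Lemma~\ref{lemma1}, I would carry out three reductions. (a) By the first Vieta relation in \eqref{sistemraizes}, $16(\beta_1+\beta_2+\beta_3)+20=20c$, and from the defining formulas for $\beta_1,\beta_3$ in Lemma~\ref{lemma1}, $\beta_3-\beta_1=\sqrt{\Delta_c(\beta_2)}$; hence the coefficient of $z$ inside $\mathrm{CN}$ in \eqref{eq7} is $\sqrt{\sqrt{\Delta_c(\beta_2)}/(20c)}=\Delta_c(\beta_2)^{1/4}/(2\sqrt{5c})$, which equals $\frac{2K(k)}{L}$ by the period identity $T_\phi(c,\beta_2)=\frac{4\sqrt{5c}}{\Delta_c(\beta_2)^{1/4}}K(k)=L$ from \eqref{periodophi}. (b) From the same period identity and $c=L^2/\tilde{m}(k)$ (by \eqref{ck}) one gets $\sqrt{\Delta_c(\beta_2)}=80K^2(k)/\tilde{m}(k)$, so the amplitude $\beta_3-\beta_2=k^2(\beta_3-\beta_1)=k^2\sqrt{\Delta_c(\beta_2)}$ (using \eqref{ellipk}) becomes $80k^2K^2(k)/\tilde{m}(k)$. (c) Writing $\beta_2=\frac{5(c-1)}{12}-\frac{2k^2-1}{3}\sqrt{\Delta_c(\beta_2)}$ and inserting $\frac{5(c-1)}{4}=\frac{64K^2(k)\sqrt{k^4-k^2+1}}{\tilde{m}(k)}$ together with the value of $\sqrt{\Delta_c(\beta_2)}$ from (b)---equivalently, substituting \eqref{beta2kl} and using $L^2-\tilde{m}(k)=64K^2(k)\sqrt{k^4-k^2+1}$---turns $\beta_2$ into the first bracketed term of \eqref{phikL}. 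Plugging (a)--(c) back into \eqref{eq7} gives exactly \eqref{phikL}, and \eqref{AkL} follows from $A=\frac{2}{5}\beta_1\beta_2\beta_3$.

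I do not expect a genuine obstacle here, since Theorem~\ref{mainteoex} supplies all the analysis and the corollary is a change of variable plus explicit algebra. The one point that deserves care is the monotonicity of $g$ (used above to pin down $k_L$ and the range of $c(k)$): because $k\mapsto\sqrt{k^4-k^2+1}$ is not monotone on $(0,1)$, I would prove $g'>0$ by differentiating $g(k)^2=64^2K^4(k)(k^4-k^2+1)$ and using $\frac{dK}{dk}=\frac{E(k)-(1-k^2)K(k)}{k(1-k^2)}$ together with the elementary inequalities $0<(1-k^2)K(k)<E(k)<K(k)$. The remaining work---the collapse of \eqref{eq7} into \eqref{phikL}---is routine but must be done attentively, in particular keeping track of the quarter-powers of $\Delta_c(\beta_2)$ and of the identity $L^2-\tilde{m}(k)=64K^2(k)\sqrt{k^4-k^2+1}$.
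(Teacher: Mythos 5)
Your proposal is correct and takes essentially the same route as the paper, whose entire proof of Corollary~\ref{lascor} is the one-line instruction to combine Theorem~\ref{mainteoex} with the reparametrization $c=c(k)$, $\beta_2=\beta_2(k)$ and the explicit form \eqref{eq7}; you have simply written out that algebra in full (and supplied the monotonicity of $g$, which the paper asserts without proof). One small transcription slip: $\frac{5(c-1)}{4}=\frac{80K^2(k)\sqrt{k^4-k^2+1}}{\tilde m(k)}$ rather than $\frac{64K^2(k)\sqrt{k^4-k^2+1}}{\tilde m(k)}$ (the latter equals $c-1$), but your computation of $\beta_2=\frac{5(c-1)}{12}-\frac{2k^2-1}{3}\sqrt{\Delta_c(\beta_2)}$ via \eqref{eq12} is correct and indeed produces the $\frac{5}{12}$ prefactor appearing in \eqref{phikL} (which, incidentally, is missing from \eqref{beta2kl} as printed).
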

\begin{proof}
Combine the definition of $\beta_2$ in \eqref{phikL} with Theorem \ref{mainteoex}, using the explicit form of $\phi=\phi_{(c,\beta_2)}$ in \eqref{eq7}.
\end{proof}

It should be noted that Corollary \ref{lascor} establishes property $(P_0)$ proposed in the introduction with $J=(0,k_L)$.

\section{Spectral Analysis}  \label{section4}

Let $\phi=\phi_k$ be an $L$-periodic solution of \eqref{el} given in Corollary \ref{lascor}. Let $\mathcal{L}_k$ be the operator defined on $H^2_{per}([0,L])$ by
\begin{equation*}
\mathcal{L}_kf=-cf''+(c-1)f-\frac{3}{2}\phi_k^{\frac{1}{2}}f.
\end{equation*}
In this section, we show that properties $(P_1)$ and $(P_2)$ hold. It is well known from Floquet's theory (see e.g., \cite{ea}) that the spectrum of $\mathcal{L}_k$ is composed by a sequence of real numbers, say, $\{\lambda_n\}$, $n=0,1,\ldots$, satisfying $\lambda_n\to\infty$, as $n\to\infty$, and
\begin{equation}\lambda_0 < \lambda_1 \leq \lambda_2 \leq \lambda_3 \leq \lambda_4 \leq \ldots \leq \lambda_{2n-1}\leq \lambda_{2n} \ldots,
\label{autovaloresbbm}
\end{equation}
where equality means the eigenvalue is double. Furthermore, if $\chi_n$ denotes an eigenfunction associated with the eigenvalue $\lambda_n$, then $\chi_0$ has no zeros in $[0,L]$, and $\chi_{2n+1}$ and $\chi_{2n+2}$ has exactly $2n+2$ zeros in $[0,L)$.

Taking the derivative with respect to $x$  in \eqref{el} (with $\phi$ replaced by  $\phi_k$), we see that zero is an eigenvalue of  $\mathcal{L}_k$ with associated eigenfunction $\phi_k'$. Also, a careful looking in the explicit form of $\phi_k$ reveals that $\phi_k'$ has exactly two zeros in the interval $[0,L)$. Thus, it must be the case that zero is the second or third eigenvalue in the list \eqref{autovaloresbbm}. In order to show that $(P_1)$ holds, we need to establish that it is the second one.

We start by considering the periodic eigenvalue problem associated with  $\tilde{\mathcal{L}}_k=(1/c)\mathcal{L}_k$, that is,
\begin{equation}\left\{\begin{array}{l}
\ds-\chi''(x)+\left[\frac{(c-1)}{c}-\frac{3}{2c}\phi_k^{\frac{1}{2}}\right]\chi(x)=\alpha \chi(x),\\
\chi(0)=\chi(L), \quad \chi'(0)=\chi'(L).
\end{array}\right.
\label{lame1}
\end{equation}
It is clear that $\alpha$ is an eigenvalue of $\tilde{\mathcal{L}}_k$ if and only if $\lambda=c\alpha$ is an eigenvalue of $\mathcal{L}_k$.
For short, we write $\phi_k$ as 
\begin{equation}\label{phisimpli}
\phi_k(x)=a^2\left[b+ \cn^2\left(\frac{2K(k)}{L}x,k\right)\right]^2,
\end{equation}
where
\begin{equation}
a=\frac{80k^2K^2(k)}{L^2-64K^2(k)\sqrt{k^4-k^2+1}}
\label{lame2}
\end{equation}
and
\begin{equation}
b=\frac{\sqrt{k^4-k^2+1}-2k^2+1}{3k^2}.
\label{lame3}
\end{equation}
With this notation, the eigenvalue problem \eqref{lame1} becomes
\begin{equation}
\left\{\begin{array}{l}
\ds-\chi''(x)+\left[\frac{(c-1)}{c}-\alpha-\frac{3}{2c}\left( a\left\{ b+\cn^2\left(\frac{2K(k)}{L}x,k\right)\right\}\right)\right]\chi(x)=0,\\
\chi(0)=\chi(L), \quad \chi'(0)=\chi'(L).
\end{array}\right.
\label{lame3.0}
\end{equation}
By making the change of variable $\Lambda(x)=\chi(\eta x)$, with $\eta:=\frac{L}{2K(k)}$,  and substituting in \eqref{lame3.0}, it turns out that 
\begin{equation}\left\{\begin{array}{l}
\ds-\Lambda''(x)+\left[\frac{\eta^2(c-1)}{c}-\eta^2\alpha-\frac{3ab\eta^2}{2c}-\frac{3a\eta^2}{2c}\cn^2\left(x,k\right)\right]\Lambda(x)=0,\\
\Lambda(0)=\Lambda(2K(k)), \quad \Lambda'(0)=\Lambda'( 2K(k)).
\end{array}\right.
\label{lame3.1}
\end{equation}
But, using the definitions of $c$, $a$ and $b$ in \eqref{ck}, \eqref{lame2} and \eqref{lame3}, respectively, we have
\begin{equation}\label{lame4}
\begin{split}
\frac{3a\eta^2}{2c}&=\frac{3}{2}\left(\frac{80k^2K^2(k)}{L^2-64K^2\sqrt{k^4-k^2+1}}\right)\left(\frac{L^2}{4K^2(k)}\right)\left(\frac{L^2-64K^2(k)\sqrt{k^4-k^2+1}}{L^2}\right)\\
&=5\cdot 6\cdot k^2,
\end{split}
\end{equation}

\begin{equation}\label{lame5}
\begin{split}
\frac{\eta^2(c-1)}{c}& =\frac{L^2}{4K^2(k)}\left(\frac{64K^2\sqrt{k^4-k^2+1}}{L^2-64K^2(k)\sqrt{k^4-k^2+1}}\right)\left(\frac{L^2-64K^2(k)\sqrt{k^4-k^2+1}}{L^2}\right)
\\
& =16\sqrt{k^4-k^2+1},
\end{split}
\end{equation}
and
\begin{equation}
\frac{3ab\eta^2}{2c}=30k^2b=10(\sqrt{k^4-k^2+1}-2k^2+1).
\label{lame6}
\end{equation}
Thus, replacing \eqref{lame4}-\eqref{lame6} in \eqref{lame3.1} and using the relation $\sn^2+\cn^2=1$, we obtain
\begin{equation}\left\{\begin{array}{l}
\ds\Lambda''(x)+\left[h-5\cdot 6\cdot k^2 \sn^2\left(x,k\right)\right]\Lambda(x)=0,\\
\Lambda(0)=\Lambda(2K(k)), \quad \Lambda'(0)=\Lambda'( 2K(k)),
\end{array}\right.
\label{lame7}
\end{equation}
where
\begin{equation}
h=10(k^2+1)+\eta^2\alpha-6\sqrt{k^4-k^2+1}.
\label{lame8}
\end{equation}
The differential equation in \eqref{lame7} is known as the Lam\'e equation, which arises from the theory of potential of an ellipsoid. Its eigenfunctions are given in terms of the so-called Lam\'e polynomials.

\subsection{Spectral Analysis of the problem \eqref{lame7}}

In this subsection, we study the eigenvalue problem \eqref{lame7}, by showing  we can use the technique developed by Ince \cite{ince} to give the first five eigenvalues and its respective eigenfunctions. In fact, according to \cite[page 53]{ince}, the function
$$
\Lambda(x)=\cn(x)\dn(x)\left(\sn(x)+D_3\sn^3(x)\right),
$$
with
$$ D_3=\frac{1}{6}(4k^2+4-h)$$
is an eigenfunction of \eqref{lame7} associated with the eigenvalue $h$, provided $h$ is a root of the quadratic equation 
\begin{equation}\label{qualame}
h^2-20(1+k^2)h+64(1+k^2)^2+108k^2=0.
\end{equation}
Here and hereafter in this section, for the sake of shortness, we drop the parameter $k$ in the elliptic functions.
Since
\begin{equation}\label{h1}
h_1=10(1+k^2)+6\sqrt{k^4-k^2+1}
\end{equation}
and
\begin{equation}\label{h2}
h_2=10(1+k^2)-6\sqrt{k^4-k^2+1},
\end{equation}\label{Lambda1}
are two real roots of \eqref{qualame}, it follows that 
\begin{equation}
\Lambda_1(x)=\cn(x)\dn(x)\left(\sn(x)-\left(\sqrt{k^4-k^2+1}+(1+k^2)\right)\sn^3(x)\right)
\end{equation}
and
\begin{equation}\label{Lambda2}
\Lambda_2(x)=\cn(x)\dn(x)\left(\sn(x)+\left(\sqrt{k^4-k^2+1}-(1+k^2)\right)\sn^3(x)\right),
\end{equation}
are two eigenfunctions to \eqref{lame7}. Also according to \cite[page 49]{ince}, the function
$$
\Lambda(x)=\dn(x,k)\left(1+C_2\sn^2(x,k)+ C_4\sn^4(x,k)\right),
$$
with
$$ C_2=\frac{1}{2}(h-k^2) \quad\mbox{and}\quad C_4=\frac{1}{12}\left(28k^2-C_2(h-4-9k^2)\right)$$
is an eigenfunction of \eqref{lame7} associated with the eigenvalue $h$, provided now  $h$ is a root of the cubic equation
\begin{equation}
h^3-(20+35k^2)h^2+(64+576k^2+259k^4)h-225k^6-1860k^4-960k^2=0.
\label{lame9}
\end{equation}
Equation \eqref{lame9} can be solved by using, for instance, the \textit{trigonometric method}. Indeed,  by setting
$$
\left\{\begin{array}{l}
z_1=-(20+35k^2),\\
z_2=64+576k^2+259k^4,\\
z_3=-225k^6-1860k^4-960k^2.
\end{array}\right.
$$
one can see that
\begin{equation}\label{h3}
h_3=2\sqrt{\frac{3z_2-z_1^2}{-9}}\mbox{cos}\left(\frac{1}{3}\mbox{arccos}\left(\frac{2z_1^3-9z_1z_2+27z_3}{6(3z_2-z_1^2)}\sqrt{\frac{-9}{3z_2-z_1^2}}\right)-\frac{4\pi}{3}\right)-\frac{z_1}{3},
\end{equation}
\begin{equation}\label{h4}
h_4=2\sqrt{\frac{3z_2-z_1^2}{-9}}\mbox{cos}\left(\frac{1}{3}\mbox{arccos}\left(\frac{2z_1^3-9z_1z_2+27z_3}{6(3z_2-z_1^2)}\sqrt{\frac{-9}{3z_2-z_1^2}}\right)-\frac{2\pi}{3}\right)-\frac{z_1}{3},
\end{equation}
 and
\begin{equation}\label{h5}
 h_5=2\sqrt{\frac{3z_2-z_1^2}{-9}}\mbox{cos}\left(\frac{1}{3}\mbox{arccos}\left(\frac{2z_1^3-9z_1z_2+27z_3}{6(3z_2-z_1^2)}\sqrt{\frac{-9}{3z_2-z_1^2}}\right)\right)-\frac{z_1}{3}.
\end{equation}
 solve \eqref{lame9}. Consequently, the functions
\begin{equation}\label{Lambda3}
\begin{split}
\Lambda_3&(x)=\dn(x)\\
&\times\left(1+\frac{1}{2}(h_3-k^2)\sn^2(x)+ \frac{1}{12}\left(28k^2-\frac{1}{2}(h_3-k^2)(h_3-4-9k^2)\right)\sn^4(x)\right),
\end{split}
\end{equation}
 \begin{equation}\label{Lambda4}
 \begin{split}
 \Lambda_4&(x)=\dn(x)\\
 &\times\left(1+\frac{1}{2}(h_4-k^2)\sn^2(x)+ \frac{1}{12}\left(28k^2-\frac{1}{2}(h_4-k^2)(h_4-4-9k^2)\right)\sn^4(x)\right),
 \end{split}
 \end{equation}
and
\begin{equation}\label{Lambda5}
\begin{split}
\Lambda_5&(x)=\dn(x)\\
&\times\left(1+\frac{1}{2}(h_5-k^2)\sn^2(x)+ \frac{1}{12}\left(28k^2-\frac{1}{2}(h_5-k^2)(h_5-4-9k^2)\right)\sn^4(x)\right).
\end{split}
\end{equation}
also are eigenfunctions of \eqref{lame7}.

Now, it is not difficult to check that the eigenvalues $h_i,i=1,\ldots,5$, can be ordered in the form
\begin{equation}\label{orderh}
h_3<h_2<h_4<h_1<h_5.
\end{equation}
In addition, in the interval $[0,2K)$,  $\Lambda_3$ has no zeros, $\Lambda_2$ and $\Lambda_4$ have exactly two zeros and $\Lambda_1$ and $\Lambda_5$ have exactly four zeros.

\subsection{Spectral Analysis of the operator $\mathcal{L}_k$} Here we prove that indeed the operator $\mathcal{L}_k$ satisfies properties $(P_1)$ and $(P_2)$. From \eqref{lame8}, we have
$$\alpha=\frac{1}{\eta^2}\left(h+6\sqrt{k^4-k^2+1}-10(k^2+1)\right).$$
Hence, the eigenvalues of  $\mathcal{L}_k$ are of the form
\begin{equation}
\begin{array}{ll}
\lambda&=\ds c\frac{1}{\eta^2}\left(h+6\sqrt{k^4-k^2+1}-10(k^2+1)\right)\\
\\
&\ds =\frac{4K^2(k)}{L^2-64K^2\sqrt{k^4-k^2+1}}\left(h+6\sqrt{k^4-k^2+1}-10(k^2+1)\right).
\end{array}
\label{lame11}
\end{equation}

By replacing in \eqref{lame11}, the $h_i,i=1,\ldots,5$, given  in last subsection, according to the order \eqref{orderh}, we find out that 
$$
\displaystyle \lambda_0=\frac{4K^2(k)}{L^2-64K^2\sqrt{k^4-k^2+1}}\left(h_3+6\sqrt{k^4-k^2+1}-10(k^2+1)\right),
$$
$$
\displaystyle \lambda_1=\frac{4K^2(k)}{L^2-64K^2\sqrt{k^4-k^2+1}}\left(h_2+6\sqrt{k^4-k^2+1}-10(k^2+1)\right)=0,
$$
$$
\displaystyle\lambda_2=\frac{4K^2(k)}{L^2-64K^2\sqrt{k^4-k^2+1}}\left(h_4+6\sqrt{k^4-k^2+1}-10(k^2+1)\right),
$$
$$
\displaystyle\lambda_3=\frac{4K^2(k)}{L^2-64K^2\sqrt{k^4-k^2+1}}\left(h_1+6\sqrt{k^4-k^2+1}-10(k^2+1)\right)
$$
$$
\displaystyle\lambda_4=\frac{4K^2(k)}{L^2-64K^2\sqrt{k^4-k^2+1}}\left(h_5+6\sqrt{k^4-k^2+1}-10(k^2+1)\right),
$$
are the first five eigenvalues of $\mathcal{L}_k$ with associated eigenfunctions given, respectively, by 
\[
\begin{split}
\ds\chi_0(x)=\dn(y)\left(1+\frac{1}{2}(h_3-k^2)\sn^2(y)+ \frac{1}{12}\left(28k^2-\frac{1}{2}(h_3-k^2)(h_3-4-9k^2)\right)\sn^4(y)\right)
\end{split}
\]
\[
\ds\chi_1(x)=\cn(y)\dn(y)\left(\sn(y)+\left(\sqrt{k^4-k^2+1}-(1+k^2)\right)\sn^3(y)\right),
\]
\[
\ds\chi_2(x)=\dn(y)\left(1+\frac{1}{2}(h_4-k^2)\sn^2(y)+ \frac{1}{12}\left(28k^2-\frac{1}{2}(h_4-k^2)(h_4-4-9k^2)\right)\sn^4(y)\right),
\]
\[
\ds\chi_3(x)=\cn(y)\dn(y)\left(\sn(y)-\left(\sqrt{k^4-k^2+1}+(1+k^2)\right)\sn^3(y)\right),
\]
\[
\ds\chi_4(x)=\dn(y)\left(1+\frac{1}{2}(h_5-k^2)\sn^2(y)+ \frac{1}{12}\left(28k^2-\frac{1}{2}(h_5-k^2)(h_5-4-9k^2)\right)\sn^4(y)\right),
\]
where $y=x/\eta$. This means that $\mathcal{L}_k$ has a unique negative eigenvalue which is simple and $\lambda_1=0$ is the second eigenvalue, which also is simple. Properties $(P_1)$ and $(P_2)$ are thus established.

\begin{remark}
According to the constructions above, 
\[
\begin{array}{ll}
\chi_1(x)\! & \!=\!\cn\!\left(\!\frac{2K(k)}{L}x\right)\!\dn\!\left(\!\frac{2K(k)}{L}x\right)\!\!\left[\!\sn\left(\!\frac{2K(k)}{L}x\right)\!+\!\left(\!\sqrt{k^4-k^2+1}\!-\!(1+k^2)\!\right)\sn^3\!\left(\!\frac{2K(k)}{L}x\right)\!\right]\\
\\
&=\ds\frac{-L}{8a^2(b+1)K(k)} \phi_k'=:C_0\phi_k',
\end{array}
\]
which is in agreement with our initial remark that $\phi_k'$ is an eigenfunction of $\mathcal{L}_k$ associated with the eigenvalue zero.
\end{remark}

\begin{remark}
The spectral analysis of the operator $\mathcal{L}_k$ can also be deduced in view of the theory developed recently in \cite{natali2}, \cite{Neves1}, \cite{Neves2}. Indeed, one can check that $\mathcal{L}_k$ is an isoinertial family of operators with inertial index given by the pair $(1,1)$, which means that $\mathcal{L}_k$ has a unique negative eigenvalue and zero is a simple eigenvalue. Since such approach uses numerical arguments, we prefer to show properties $(P_1)$ and $(P_2)$ by using the arguments in view of the Lam\'e equation.
\end{remark}

\section{Orbital stability}\label{section5}

Our   goal in this section  is to prove the orbital stability for the $L$-periodic traveling-wave solutions  we have shown to exist in Corollary \ref{lascor}.   Before proceeding let us make clear what we mean by orbital stability. First of all note that \eqref{bbm} is invariant by spatial translations. Roughly speaking, we say that a solution $\phi$ of \eqref{el}  is orbitally stable if, for each $u_0\in H^1_{per}([0,L])$ close to $\phi$, the global solution given in Theorem \ref{gwellp} with initial data $u_0$ remains close to $\phi$ up to a translation. More precisely.

\begin{definition}
Let $\phi$ be an $L$-periodic solution of \eqref{el}. We say that $\phi$ is orbitally stable by the flow of \eqref{bbm} in  $H^1_{per}([0,L])$ if, for any $\varepsilon>0$, there exists $\delta>0$ such that if $u_0\in H^1_{per}([0,L])$ satisfies
$$
\|u_0-\phi\|_{H^1_{per}}<\delta,
$$
then the solution $u(t)$ of \eqref{bbm}, with initial data $u_0$, exists globally and satisfies
$$
\sup_{t\geq0}\inf_{r\in\R}\|u(t)-\phi(\cdot+r)\|_{H^1_{per}}<\varepsilon.
$$
Otherwise, we say that $\phi$ is $H^1_{per}$-unstable.
\end{definition}

In what follows in this section, $\phi_k$ will be an $L$-periodic solution given in Corollary \ref{lascor}. Also, $c=c(k)$ and $A=A(k)$ will be the functions defined in \eqref{ck} and \eqref{AkL}. Note that $k\in(0,k_L)\mapsto c(k)$ and $k\in(0,k_L)\mapsto A(k)$ are smooth  functions and
\begin{equation}\label{deric}
\frac{\partial c}{\partial k}(k)>0.
\end{equation}
Hence, we can define the smooth functional
$$
M_k(u):=\ds\frac{\partial c}{\partial k}Q(u)+\frac{ \partial A}{\partial k} V(u),
$$
where the derivatives of $c$ and $A$ are evaluated at $k$, and $Q$ and $V$ are given in \eqref{quantitiesconserved} and \eqref{quantitiesconserved1}, respectively.

Our main theorem concerning orbital stability reads as follows.

\begin{theorem}
For each $k\in(0,k_L)$, the periodic traveling wave $\phi_k$
 is orbitally stable by the flow of \eqref{bbm} in $H^1_{per}([0,L])$.
\label{bbmmeantheorem}
\end{theorem}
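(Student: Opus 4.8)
The plan is to follow the classical Grillakis–Shatah–Strauss / Benjamin framework, adapted to the situation $A\neq 0$ where one extra conservation law is needed. The first step is to recall that $\phi_k$ is a critical point of the augmented functional $F_k:=F_{(c(k),A(k))}=E+(c-1)Q+AV$, i.e. $F_k'(\phi_k)=0$, which is precisely equation \eqref{el}. Since $E$, $Q$, $V$ are conserved by the flow (the global flow being supplied by Theorem \ref{gwellp}), orbital stability will follow from a coercivity estimate for $F_k$ near the orbit $\{\phi_k(\cdot+r):r\in\R\}$. The natural constraint manifold is not the usual codimension-two manifold $\{Q=Q(\phi_k),\ V=V(\phi_k)\}$ but the codimension-one manifold
$$
\Sigma_k:=\{u\in H^1_{per}([0,L]): M_k(u)=M_k(\phi_k)\},
$$
where $M_k=\frac{\partial c}{\partial k}Q+\frac{\partial A}{\partial k}V$; note $F_k'(\phi_k)=0$ together with differentiating \eqref{el} in $k$ shows that $\mathcal L_k(\partial_k\phi_k)=-\,\partial_k c\,(\phi_k-\phi_k'')-\partial_k A = -M_k'(\phi_k)$ (up to the identification of $Q',V'$ with their Riesz representatives), so $\phi_k$ is indeed a constrained critical point of $F_k$ on $\Sigma_k$.

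The heart of the argument is the spectral lemma: \emph{$\mathcal L_k$ is positive definite on the subspace}
$$
\mathcal{N}_k:=\{v\in H^1_{per}([0,L]): \langle v,\phi_k'\rangle=0,\ \langle M_k'(\phi_k),v\rangle=0\},
$$
i.e. there is $C>0$ with $\langle\mathcal L_k v,v\rangle\ge C\|v\|_{H^1_{per}}^2$ for all $v\in\mathcal N_k$. By $(P_1)$ and $(P_2)$, $\mathcal L_k$ has exactly one negative eigenvalue (simple, with positive ground-state eigenfunction $\chi_0$) and a simple zero eigenvalue spanned by $\phi_k'$, the rest of the spectrum being positive and bounded away from $0$. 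The condition $\langle v,\phi_k'\rangle=0$ removes the kernel direction. To kill the negative direction one uses the standard criterion: positivity of $\mathcal L_k$ on the codimension-one space orthogonal to $M_k'(\phi_k)$ holds provided
$$
\langle \mathcal L_k^{-1} M_k'(\phi_k),\, M_k'(\phi_k)\rangle < 0
$$
(interpreting $\mathcal L_k^{-1}$ on the range, which is legitimate since $M_k'(\phi_k)\perp\phi_k'$ because $\phi_k'$ is odd-type about the symmetry center while $M_k'(\phi_k)$ is even — more precisely $\langle\chi_1,\phi_k'\rangle$ and the structure give $\mathcal L_k(\partial_k\phi_k)=-M_k'(\phi_k)\in\mathrm{Ran}\,\mathcal L_k$). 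But then $\mathcal L_k^{-1}M_k'(\phi_k)=-\,\partial_k\phi_k$ (modulo $\mathrm{span}\,\phi_k'$), so
$$
\langle \mathcal L_k^{-1} M_k'(\phi_k),\, M_k'(\phi_k)\rangle = \langle \mathcal L_k(\partial_k\phi_k),\,\partial_k\phi_k\rangle = \Phi,
$$
which is exactly the quantity in $(P_3)$. Thus the spectral lemma reduces to $\Phi<0$, which is property $(P_3)$ (to be proved in this section, as announced). I expect $(P_3)$ — the explicit verification that $\Phi=\langle\mathcal L_k\partial_k\phi_k,\partial_k\phi_k\rangle<0$ using the closed-form expressions for $\phi_k$, $c(k)$, $A(k)$ in terms of $K(k),E(k)$ — to be the main technical obstacle; it is the one genuinely computational ingredient and where the concrete choice $B=0$ pays off.

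Granting the spectral lemma, the stability argument is routine. Given $\varepsilon>0$ small, define $d(u):=\inf_{r}\|u-\phi_k(\cdot+r)\|_{H^1_{per}}$ and suppose for contradiction there is a solution with $d(u_0)<\delta$ but $d(u(t_0))=\varepsilon$ for some first time $t_0$ (using continuity of $t\mapsto d(u(t))$ and global existence). Pick $r=r(t)$ realizing the infimum for $t\le t_0$ (an implicit-function argument near the orbit makes $r(t)$ smooth), and write $u(t,\cdot+r(t))=\phi_k+v(t)$ with $\langle v(t),\phi_k'\rangle=0$ coming from the minimization, and $\|v(t)\|_{H^1_{per}}=\varepsilon$. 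Since $\Sigma_k$ has codimension one and $M_k$ is conserved, a further modulation $\phi_k\rightsquigarrow\phi_{k(t)}$ (adjusting $k$ slightly, using $\partial_k c\neq 0$ from \eqref{deric}) puts the perturbation on $\Sigma_{k(t)}$ at the cost of an $O(\|v\|^2)$ error; then Taylor expansion
$$
F_{k(t)}(u(t)) = F_{k(t)}(\phi_{k(t)}) + \tfrac12\langle \mathcal L_{k(t)} v,v\rangle + o(\|v\|_{H^1_{per}}^2),
$$
the spectral lemma $\langle\mathcal L_{k(t)} v,v\rangle\ge C\|v\|^2$, and conservation of $E,Q,V$ (hence of $F_{k(t)}$ up to controlled terms) yield $C\|v(t_0)\|^2_{H^1_{per}}\lesssim |F_{k(0)}(u_0)-F_{k(0)}(\phi_k)| + (\text{modulation errors}) \lesssim \delta$, contradicting $\|v(t_0)\|=\varepsilon$ for $\delta$ small enough. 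This proves Theorem \ref{bbmmeantheorem}. The only points requiring care beyond $(P_3)$ are the smoothness of the modulation parameters $r(t),k(t)$ (standard implicit function theorem near the non-degenerate orbit, using $(P_2)$ and \eqref{deric}) and the bookkeeping of the $o(\|v\|^2)$ remainder, for which the smoothness of $E$ away from the origin — noted after \eqref{quantitiesconserved1}, and applicable since $\phi_k>0$ stays bounded away from $0$ for $v$ small — is exactly what is needed.
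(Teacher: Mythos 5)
Your overall architecture coincides with the paper's: $\phi_k$ is treated as a constrained minimizer of $F_k=E+(c-1)Q+AV$ on the codimension-one manifold $\Sigma_k=\{M_k(u)=M_k(\phi_k)\}$; the coercivity of $\mathcal{L}_k$ on $\{\phi_k'\}^{\perp}\cap\{M_k'(\phi_k)\}^{\perp}$ is reduced to the sign condition $\Phi<0$ through the identity $\mathcal{L}_k(\partial_k\phi_k)=-M_k'(\phi_k)$ (your computation $\langle\mathcal{L}_k^{-1}M_k'(\phi_k),M_k'(\phi_k)\rangle=\Phi$ is correct and is exactly the replacement of $d''(c)>0$ made in Lemma \ref{bbm15}); and the conclusion follows by Taylor expansion plus a contradiction argument. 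The one genuinely different step is how you push a general perturbation onto the constraint manifold: you modulate the parameter $k$, choosing $\tilde k$ with $M_{\tilde k}(u_0)=M_{\tilde k}(\phi_{\tilde k})$, whereas the paper keeps $k$ fixed and rescales the solution, choosing $\alpha_n>0$ with $M_k(\alpha_nu_n(t_n))=M_k(\phi_k)$ --- possible because $\alpha\mapsto M_k(\alpha u)$ is an explicit quadratic with positive leading coefficient --- and then proving $\alpha_n\to1$. Both devices work; the paper's avoids tracking a $k$-dependent family of functionals and coercivity constants, yours avoids verifying $M_k(\phi_k)>0$. Note, though, that the non-degeneracy needed for your $k$-modulation is not $\partial_kc\neq0$ but $\frac{d}{dk}M_k(\phi_k)-\partial_kM_k(\phi_k)=\langle M_k'(\phi_k),\partial_k\phi_k\rangle=-\Phi\neq0$, i.e.\ property $(P_3)$ again.

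The genuine gap is that you never establish $(P_3)$. You correctly identify $\Phi<0$ as ``the one genuinely computational ingredient'' and then defer it, but for this theorem that computation essentially \emph{is} the proof: the rest is standard Grillakis--Shatah--Strauss machinery, and without the sign of $\Phi$ both the coercivity estimate and your modulation step are conditional. The paper's Lemma \ref{lemma5.3} carries this out: writing $V(\phi_k)=c^2g_1(k)$, $Q(\phi_k)=c^4g_2(k)$, $A=c^3g_3(k)$ with $g_i$ explicit combinations of $K(k)$, $E(k)$ and $\sqrt{k^4-k^2+1}$ obtained from the integrals of $\cn^{2n}$, one finds $-\Phi=(4g_2+6g_1g_3)c^3(\partial_kc)^2+(g_2'+3g_3g_1'+2g_1g_3')c^4\partial_kc+g_1'g_3'c^5$, and then checks positivity of the three coefficients; the hardest, $f_2$, is reduced to a single $L$-independent inequality $r(k)<0$ (using $m_1(k)<0$ and $L>4\pi$) and settled by series expansions of $K$, $E$ and $\sqrt{k^4-k^2+1}$. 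This is where the choice $B=0$ and the explicit cnoidal profile are actually cashed in, and a complete proof must include it.
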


Before proving Theorem \ref{bbmmeantheorem}, we need a series of lemmas. Let us start by showing that property $(P_3)$ holds.

\begin{lemma} \label{lemma5.3}
Let $\Phi$ be defined as
 $$\Phi:=\left\langle\mathcal{ L}_k\left(\frac{\partial \phi_k}{\partial k}\right),\frac{\partial \phi_k}{\partial k}\right\rangle.
$$
 Then,
\begin{equation} \ds \Phi=-\left( M'_k(\phi_k),\frac{\partial \phi_{k}}{\partial k}\right)_{L^2_{per}} <0.
\label{condicarestabilidade1}
\end{equation}
\end{lemma}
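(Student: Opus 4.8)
The plan is to relate the quadratic form $\Phi = \langle \mathcal{L}_k (\partial_k \phi_k), \partial_k \phi_k\rangle$ to the derivative of a suitable functional evaluated along the curve $k \mapsto \phi_k$, and then compute that derivative explicitly. The starting point is the observation that, since $\phi_k$ solves \eqref{el} with $c = c(k)$ and $A = A(k)$, one has
\[
F_{(c(k),A(k))}'(\phi_k) = E'(\phi_k) + (c(k)-1)Q'(\phi_k) + A(k)V'(\phi_k) = 0
\]
as an identity in $H^{-1}_{per}$. Differentiating this identity with respect to $k$ and using that $\mathcal{L}_k = F_{(c(k),A(k))}''(\phi_k)$ yields
\[
\mathcal{L}_k\!\left(\frac{\partial \phi_k}{\partial k}\right) + c'(k)\,Q'(\phi_k) + A'(k)\,V'(\phi_k) = 0,
\]
that is, $\mathcal{L}_k(\partial_k \phi_k) = -\big(c'(k)Q'(\phi_k) + A'(k)V'(\phi_k)\big) = -M_k'(\phi_k)$, where $M_k'(\phi_k)$ denotes the $L^2_{per}$-gradient of $M_k$ at $\phi_k$ (here $Q'(\phi_k) = \phi_k - \phi_k''$ and $V'(\phi_k) = 1$, both in $L^2_{per}$, so the right-hand side is genuinely an $L^2_{per}$ function, not merely a distribution). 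Pairing this against $\partial_k \phi_k$ in the duality $H^1_{per}$–$H^{-1}_{per}$ gives immediately
\[
\Phi = \left\langle \mathcal{L}_k\!\left(\frac{\partial \phi_k}{\partial k}\right), \frac{\partial \phi_k}{\partial k}\right\rangle = -\left(M_k'(\phi_k), \frac{\partial \phi_k}{\partial k}\right)_{L^2_{per}},
\]
which is the first equality in \eqref{condicarestabilidade1}.

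Next I would identify the $L^2_{per}$ inner product on the right with a total derivative in $k$. Observe that $\big(M_k'(\phi_k), \partial_k \phi_k\big)_{L^2_{per}} = \big(c'(k)Q'(\phi_k) + A'(k)V'(\phi_k), \partial_k\phi_k\big)_{L^2_{per}} = c'(k)\,\tfrac{d}{dk}Q(\phi_k) + A'(k)\,\tfrac{d}{dk}V(\phi_k)$, by the chain rule. Thus it suffices to compute $\tfrac{d}{dk}Q(\phi_k)$ and $\tfrac{d}{dk}V(\phi_k)$ and then to check the sign of the resulting expression. Since all the ingredients — $\phi_k$ via \eqref{phikL}, and $c(k)$, $A(k)$ via \eqref{ck}, \eqref{AkL} — are explicit in terms of $k$ and the complete elliptic integrals $K(k)$, $E(k)$, these quantities can be evaluated in closed form using the standard integration formulas for powers of $\cn^2$ over a period and the derivative rules for $K$ and $E$. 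The strict inequality $\Phi < 0$ then reduces to an inequality between elliptic-integral expressions, to be verified for all $k \in (0,k_L)$, possibly by exhibiting monotonicity and a limiting value as in the proof of Lemma \ref{deriper}.

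The main obstacle is the explicit evaluation of $\tfrac{d}{dk}Q(\phi_k)$ and $\tfrac{d}{dk}V(\phi_k)$ and the subsequent sign analysis: the integrand $\phi_k$ is a square of a quadratic in $\cn^2$, so $Q(\phi_k)$ involves integrals of $\cn^2, \cn^4, \cn^6, \cn^8$ (and the corresponding derivatives of $\phi_k$), producing a lengthy but mechanical computation in $K$, $E$, and the root parameters $\beta_i(k)$. A cleaner route, which I would try first, is to avoid differentiating the explicit formula directly: instead use the relation \eqref{eqquad} with $B=0$ (and its $k$-derivative) together with the Pohozaev-type identities obtained by pairing \eqref{el} against $\phi_k$, against $1$, and against $x\phi_k'$, so as to express $Q(\phi_k)$ and $V(\phi_k)$ — and hence their $k$-derivatives — in terms of lower-order moments and of $c(k)$, $A(k)$, $T_\phi = L$. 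This should collapse the sign question to something tractable, and the positivity $c'(k) > 0$ from \eqref{deric} together with the already-established monotonicity of the period in Lemma \ref{deriper} are the facts I expect to carry the final estimate.
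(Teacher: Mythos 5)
Your derivation of the identity $\Phi=-\left(M_k'(\phi_k),\partial_k\phi_k\right)_{L^2_{per}}$ is correct and is exactly the paper's argument: differentiate the traveling-wave equation (equivalently $F_{(c(k),A(k))}'(\phi_k)=0$) in $k$, obtain $\mathcal{L}_k(\partial_k\phi_k)=-M_k'(\phi_k)$, and pair with $\partial_k\phi_k$; the rewriting of the right-hand side as $c'(k)\frac{d}{dk}Q(\phi_k)+A'(k)\frac{d}{dk}V(\phi_k)$ also matches \eqref{14.1}.

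However, the substantive content of the lemma is the strict inequality $\Phi<0$, and your proposal does not establish it: it stops at ``it suffices to compute $\frac{d}{dk}Q(\phi_k)$ and $\frac{d}{dk}V(\phi_k)$ and check the sign,'' which is precisely where essentially all of the work lies. In the paper this is done by writing $V(\phi_k)=c^2g_1(k)$, $Q(\phi_k)=c^4g_2(k)$, $A=c^3g_3(k)$ with explicit elliptic-integral expressions (via the moments $C_{2n}$ of $\cn^{2n}$), so that $-\Phi$ becomes a quadratic expression in $c'$ with coefficients $f_1=4g_2+6g_1g_3$, $f_2=g_2'+3g_3g_1'+2g_1g_3'$, $f_3=g_1'g_3'$; the sign then follows from $c>1$, $c'>0$ and the positivity of $f_1,f_2,f_3$, each of which requires its own (lengthy) analysis in terms of $K(k)$, $E(k)$ and $\sqrt{k^4-k^2+1}$, including the reduction of $f_2>0$ to the inequality $r(k)<0$, which is uniform in $L>4\pi$ only because $m_1(k)<0$. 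Your expectation that $c'(k)>0$ and the period monotonicity of Lemma \ref{deriper} ``carry the final estimate'' is not borne out: those facts enter, but they are far from sufficient, and the alternative Pohozaev-type route you mention is left entirely speculative. As written, the proof of the key assertion $\Phi<0$ is missing.
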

\begin{proof}  First we note that  $Q$ and $V$ have  Fr\'echet derivative at $\phi_k$ given by
\begin{equation}Q'(\phi_k)=\phi_k-{\phi_k}'' \qquad \mbox{and}\qquad V'(\phi_k)=1.\label{eq13.1}\end{equation}
On the other hand, by taking the derivative with respect to $k$ in (\ref{el}), we obtain
$$-\frac{\partial c}{\partial k} \left(\phi_k\right)''-c\left(\frac{\partial \phi_k}{\partial k}\right) ''+\frac{\partial c}{\partial k}\phi_k+(c-1)\frac{\partial \phi_k}{\partial k}-\frac{3}{2}\phi_k^{\frac{1}{2}}\frac{\partial \phi_k}{\partial k}+\frac{\partial A}{\partial k}=0,$$
that is,
\begin{equation}\mathcal{L}_k\left(\frac{\partial\phi_k}{\partial k}\right)+\frac{\partial c}{\partial k}(-\phi_k''+\phi_k)+\frac{\partial A}{\partial k}=0.
\label{eq14}
\end{equation}
Thus, from \eqref{eq13.1} and \eqref{eq14}, we get
$$
\mathcal{L}_k\left(\frac{\partial\phi_k}{\partial k}\right)=-\left(\frac{\partial c}{\partial k}  Q'(\phi_k)+\frac{ \partial A}{\partial k} V'(\phi_k)\right)=-M'_k(\phi_k).
$$
This implies that $\Phi$ can be written as
$$
-\Phi=\left( M'_k(\phi_k),\frac{\partial \phi_{k}}{\partial k}\right)_{L^2_{per}}=\ds \left( \frac{\partial c}{\partial k} \frac{\partial }{\partial k}\left(\frac{1}{2}\int_0^L \left(\phi_k '\right)^2+\phi_k^2\right)dx + \frac{\partial A}{\partial k}\frac{\partial}{\partial k}\int_0^L\phi_k dx\right),
$$
i.e.,
\begin{equation}
-\Phi=\left(\frac{\partial c}{\partial k} \frac{\partial }{\partial k}Q(\phi_k) + \frac{\partial A}{\partial k}\frac{\partial}{\partial k}V(\phi_k)\right).\label{14.1}
\end{equation}

In order to determine the sign of $\Phi$, we take the derivative with respect to $x$ in \eqref{phisimpli} to obtain
\begin{equation}\left(\frac{\partial\phi_k}{\partial x}\right)^2=\frac{64a^4K^2}{L^2}\left( b+\cn^2\right)^2\left(\cn^2\sn^2 \dn^2\right),
\label{phixkL}
\end{equation}
where,  for short, we used the notation $\cn^2:=\cn^2\left(\frac{2K(k)}{L}x,k\right)$, $\sn^2:=\sn^2\left(\frac{2K(k)}{L}x,k\right)$, $\dn^2:=\dn^2\left(\frac{2K(k)}{L}x,k\right)$ and $K^2:=K^2(k)$. Now,  by using the relations $\sn^2+\cn^2=1$ and $\dn^2+k^2\sn^2=1$,
we can express (\ref{phixkL}) as
\[
\begin{split}
\ds\left(\frac{\partial\phi_k}{\partial x}\right)^2 &=\ds \frac{64a^4K^2}{L^2}\Big{(} \big{[}b^2(1-k^2)\big{]}\cn^2+\big{[}2b(1-k^2)  +b^2(2k^2-1)\big{]}\cn^4\\
&\ds\;\;\;+\big{[}(1-k^2)-b^2k^2+2b(2k^2-1)\big{]}\cn^6\ds+ \big{[} (2k^2-1)-2bk^2 \big{]} \cn^8 - k^2\cn^{10}\Big{)}.
\end{split}
\]
Integrating $\phi_k$, $\phi^2_k$ and $\ds\left( \frac{\partial \phi_k}{\partial x}\right)^2$ from 0 to $L$, we have
\begin{equation}\label{14.4}
\ds\int_0^L\phi_k(x)dx=a^2\left[b^2L+2bC_2+C_4\right],
\end{equation}
\begin{equation}\label{14.41}
\ds\int_0^L\phi_k^2(x)dx=a^4\left[b^4L+4b^3C_2+6b^2C_4+4bC_6+C_8\right],
\end{equation}
and
\begin{equation}
\begin{array}{rl}
\ds\int_0^L\!\!\left(\frac{\partial\phi_k}{\partial x}\right)^2\!\!\!(x )dx=&\ds \frac{64a^4K^2}{L^2}\Big{(} \big{[}b^2(1-k^2)\big{]}C_2+\big{[}2b(1-k^2)  +b^2(2k^2-1)\big{]}C_4\\
\\
&\ds+\big{[}(1-k^2)-b^2k^2+2b(2k^2-1)\big{]}C_6\\
\\
&\ds+ \big{[} (2k^2-1)-2bk^2 \big{]} C_8 - k^2C_{10}\Big{)},
\end{array}
\label{14.5}
\end{equation}
where
$$C_{2n}=\int_0^L\cn^{2n}\left(\frac{2K(k)}{L}x,k\right)dx.$$
By making the change of variables $\xi=\frac{2K(k)}{L}x$ and using  that $\cn^{2n}$ é is an even function, we obtain
\begin{equation}
C_{2n}=\frac{L}{K(k)}\int_0^{K(k)}\cn^{2n}(\xi,k)d\xi=:\frac{L}{K(k)}\tilde{ C}_{2n}, \label{14.6}
\end{equation}
where (see  formulas \textsection  $312.02$, \textsection $312.04$ and  \textsection $312.05$ in \cite{friedman})
$$
\tilde{C}_2=\frac{E(k)-(1-k^2)K(k)}{k^2},
$$
$$\tilde{C}_4=\frac{(2-3k^2)(1-k^2)K(k)+2(2k^2-1)E(k)}{3k^4},
$$
and
$$\tilde{C}_{2n+2}=\frac{2n(2k^2-1)\tilde{C}_{2n}+(2n-1)(1-k^2)\tilde{C}_{2n-2}}{ (2n+1)k^2} \quad n=2,3,4.
$$
The quantities $\tilde{C}_2$, $\tilde{C}_4$ and $\tilde{C}_{2n+2}$, for $ n=2,3,4,$  provide an explicit formula for $Q(\phi_k)$ and $V(\phi_k)$ and then for $\Phi$  as well. Indeed, after some calculations involving \eqref{ck}, \eqref{AkL}, \eqref{lame2}, \eqref{lame3}, \eqref{14.4}, \eqref{14.41}, \eqref{14.5}, and \eqref{14.6} one can write 
$$V(\phi_k)=c^2(k)g_1(k),$$
$$Q(\phi_k)=c^4(k)g_2(k),$$
and
$$A=c^3(k)g_3(k),$$
where
$$\begin{array}{l}
\ds g_1(k)=\frac{12800K^3(k)}{9L^3}\left[\left((k^4-k^2+1)+(k^2-2)\sqrt{k^4-k^2+1}\right)K(k)+3\sqrt{k^4-k^2+1}E(k)\right],\\
\\
\ds g_3(k)=-\frac{204800K^6(k)}{27L^6}\left[2k^6-3k^4-3k^2+2+(2k^4-2k^2+2)\sqrt{k^4-k^2+1}\right]\end{array}$$
and $2g_2(k)=h_1(k)+h_2(k)$ with
$$\begin{array}{ll}
h_1(k)&=\, \ds\frac{32768000K^7(k)}{567L^7}\left[ (100-200k^2+225k^4-125k^6+70k^8)K(k)\right.\\ \\
& \hspace{3,4cm} + (70k^6-252k^4+336k^2-224)\sqrt{k^4-k^2+1}K(k)\\ \\
& \hspace{3,4cm} + (-30k^6+45k^4+45k^2-30)E(k)\\ \\
& \hspace{3,4cm} + \left. (294k^4-294k^2+294)\sqrt{k^4-k^2+1}E(k)\right]
\end{array}
$$
and
$$\begin{array}{ll}
h_2(k)&=\, \ds- \frac{1048576000K^9(k)}{189L^9}\left[ (7k^8-28k^6+42k^4-35k^2+14)K(k)\right.\\ \\
& \hspace{4cm} + (-5k^6-5k^4+20k^2-10)\sqrt{k^4-k^2+1}K(k)\\ \\
& \hspace{4cm} + (-14k^8+28k^6-42k^4+28k^2-14)E(k)\\ \\
& \hspace{4cm} + \left. (10k^6-15k^4-15k^2+10)\sqrt{k^4-k^2+1}E(k)\right].
\end{array}
$$
The functions $h_1$ and $h_2$ are obtained in view of \eqref{14.41} and \eqref{14.5},  respectively. Hence, taking derivatives with respect to $k$ in the above functions, we obtain, from \eqref{14.1}, 
$$ 
-\Phi=\ds (4g_2+6g_1g_3)c^3\left(\frac{\partial c}{\partial k}\right)^2 
 +(g_2'+3g_3g_1'+2g_1g_3')c^4\frac{\partial c}{\partial k}+g_1'g_3'c^5. $$
Since $c>1$, we see that $-\Phi>0$ if and only if
\begin{equation}
(4g_2+6g_1g_3)\left(\frac{\partial c}{\partial k}\right)^2+(g_2'+3g_3g_1'+2g_1g_3')c\frac{\partial c}{\partial k}+ g_1'g_3'c^2>0
\label{t1}\end{equation}
Recalling that $\ds \frac{\partial c}{\partial k}>0$, for \eqref{t1} it suffices that
$$f_1:=4g_2+6g_1g_3>0, \quad f_2:=g_2'+3g_3g_1'+2g_1g_3'>0,\quad \mbox{and} \quad f_3:=g_1'g_3'>0.$$

The most difficult calculations are those related to the quantity $f_2$. So, we show next that $f_2(k)>0$. Let us start with the derivatives. Since
$$\frac{d K(k)}{d k}=\frac{E(k)-(1-k^2)K(K)}{k(1-k^2)}\quad \mbox{and}\quad \frac{d E(k)}{d k}=\frac{E(k)-K(k)}{k},$$
we obtain
$$\begin{array}{l}
\ds g_1'(k) = \frac{12800K^2(k)}{9L^3k(k^2-1)\sqrt{k^4-k^2+1}}\left(p_1(k)K^2(k)+p_2(k)E(k)K(k)+p_3(k)E^2(k)\right),\\
\\
\ds g_3'(k)=\frac{409600K^5(k)}{9L^6k(k^2-1)\sqrt{k^4-k^2+1}}\left(p_4(k)K(k)+p_5(k)E(k)\right),\\
\\
\ds h_1'(k)=\frac{32768000K^6(k)}{81L^7k(k^2-1)\sqrt{k^4-k^2+1}} \left(p_6(k)K^2(k)+p_7(k)E(k)K(k)+p_8(k)E^2(k)\right),\\
\end{array}$$
and
$$\begin{array}{l}
\ds h_2'(k) = -\frac{1048576000K^8(k)}{63L^9k(k^2-1)\sqrt{k^4-k^2+1}}\left(p_9(k)K^2(k)+p_{10}(k)E(k)K(k)+p_{11}(k)E^2(k)\right),
\end{array}$$
where $p_i$, $i=0,1,\cdots, 11,$ are well-known functions defined in the Appendix.

As a consequence, we see that $f_2(k)>0$ is equivalent to
$$ \begin{array}{l}
\ds\frac{2^{14}10^3 m_1(k)K^6(k)}{81L^7k(k^2-1)\sqrt{k^4-k^2+1}} +\frac{2^{19}10^3m_2(k)K^8}{63L^9k(k^2-1)\sqrt{k^4-k^2+1}}\\ 
\\
\ds +\frac{2^{18}10^4m_3(k)K^8}{81L^9k(k^2-1)\sqrt{k^4-k^2+1}}+\frac{2^{20}10^4m_4(k)K^8(k)}{81L^9k(k^2-1)\sqrt{k^4-k^2+1}}>0
\end{array}$$
which in turn is equivalent to
\begin{equation}
\frac{L^2m_1(k)}{9}+\frac{32m_2(k)K^2(k)}{7}+\frac{160 m_3(k)K^2(k)}{9}+\frac{640 m_4(k)K^2(k)}{9}<0.
\label{t2}
\end{equation}
The functions $m_i$, $i=1,2,3,4$, depend on $p_i$ and are also defined in the Appendix.

Since $m_1(k)<0$ and $L>4\pi$, in  order to show \eqref{t2}, it suffices that
\begin{equation}
r(k):=\frac{16\pi^2m_1(k)}{9}+\frac{32m_2(k)K^2(k)}{7}+\frac{160 m_3(k)K^2(k)}{9}+\frac{640 m_4(k)K^2(k)}{9}<0.
\label{t3}
\end{equation}

Note that the function $r(k)$ does not depend of the period $L$ and it is a combination of  polynomials with  $K(k)$, $E(k)$, and the function $\sqrt{k^4-k^2+1}$. Thus, we can indeed prove that $r(k)$ is negative for all $k\in(0,1)$.

 Since $r(k)$ is very small for $k$ small, to see that it is negative for $k$ small, we can use the asymptotic expansion of $K(k)$, $E(k),$ and $\sqrt{k^4-k^2+1}$. In fact, recalling  that (see e.g., \cite{friedman})
$$
K(k)=\frac{\pi}{2}\left(1+\frac{1}{4}k^2+\frac{9}{64}k^4+\frac{25}{256}k^6+\ldots \right),
$$
$$
E(k)=\frac{\pi}{2}\left(1-\frac{1}{4}k^2-\frac{3}{64}k^4-\frac{5}{256}k^6+\ldots \right),
$$
and
$$
\sqrt{k^4-k^2+1}=1-\frac{1}{2}k^2+\frac{9}{124}k^4+\frac{135}{720}k^6+\ldots,
$$
we deduce that,
$$r(k)=-80\pi^4k^4+O(k^6).$$
This shows, therefore, that $f_2$ is positive. 

By using similar arguments, we can also show the positivity of $f_1$ and $f_3$. In particular, for $k$ small,
$$
f_1(k)= \frac{80}{3}\pi^3-\frac{140}{3}\pi^3k^2+O(k^4)
$$
and
$$
f_3(k)= \frac{63}{128}\pi^3k^8+O(k^{10}).
$$
The proof of the lemma is thus completed.
\end{proof}

\begin{remark}
In the proof of Lemma \ref{lemma5.3}, we have used that $m_1(k)<0$ in order to ensure that we can replace $L^2$ in \eqref{t2} by $16\pi^2$ (recall that $L>4\pi$). Since $m_1(k)$ depends only on $K(k)$ and $E(k)$, one can prove that $m_1(k)<0$ following similar arguments as those above.
\end{remark}

Next, we turn attention to show how properties $(P_1)$-$(P_3)$ imply Theorem \ref{bbmmeantheorem}. The functional $F_{(c,A)}$ in \eqref{quantidadeconservadaF} is defined  in such a way that  \eqref{el} is its   Euler-Lagrange equation. Taking into account that $c$ and $A$ are functions of $k$, we now consider the functional $F_k:=F_{(c(k),A(k))}$, given by
\begin{equation}
F_k=E+(c(k)-1)Q+A(k)V.
\label{bbm4}
\end{equation}
In $H^1_{per}([0,L])$, let $\rho$ be the pseudo-metric  defined as 
$$
\rho(u,v):=\inf_{r\in\R} \| u-v(\cdot+r)\|_{H_{per}^1},
$$
Given any $\varepsilon>0$, $U_\varepsilon(\phi_k)$ denotes the $\varepsilon$-neighborhood of $\phi_k$ with respect to $\rho$, that is,
$$
U_\varepsilon(\phi_k)=\{u\in H^1_{per}([0,L]); \; \rho(u,\phi_k)<\varepsilon\}.
$$
We also introduce the manifold $\Sigma_k$  as
$$
\Sigma_k:=\{u\in H^1_{per}([0,L]);\,\,  M_k(u)=M_k(\phi_k)\}.
$$

Now, we state two classical lemmas. The proofs in our case are very close to the original ones.

\begin{lemma}\label{bbm14}
There exist $\varepsilon>0$ and a $C^1$ map  $\omega:U_\varepsilon(\phi_k) \to \mathbb{R}$, such that for all $u\in U_\varepsilon(\phi_k)$,
$$\Big( u( \cdot+\omega(u)),\phi_k'\Big)_{L^2_{per}}=0.
$$
\end{lemma}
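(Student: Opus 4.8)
The plan is to apply the implicit function theorem to a suitably chosen functional on $\mathbb{R}\times H^1_{per}([0,L])$, a standard device for constructing the modulation parameter $\omega(u)$. Concretely, define
$$
\Theta: \mathbb{R}\times H^1_{per}([0,L]) \longrightarrow \mathbb{R}, \qquad \Theta(r,u):=\bigl(u(\cdot+r),\phi_k'\bigr)_{L^2_{per}}.
$$
First I would check that $\Theta$ is well-defined and of class $C^1$: since translation is a strongly continuous (indeed smooth) action on $H^1_{per}$, and $\phi_k'\in C^\infty_{per}([0,L])\subset L^2_{per}$ is fixed, the map $(r,u)\mapsto u(\cdot+r)$ is $C^1$ from $\mathbb{R}\times H^1_{per}$ into $L^2_{per}$, and pairing against the fixed function $\phi_k'$ is linear and bounded. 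In fact $\partial_r\Theta(r,u)=\bigl(u'(\cdot+r),\phi_k'\bigr)_{L^2_{per}}=-\bigl(u(\cdot+r),\phi_k''\bigr)_{L^2_{per}}$, which makes the $C^1$ regularity transparent even for $u$ merely in $H^1_{per}$.

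Next I would evaluate at the base point $(0,\phi_k)$. Since $\phi_k$ is even about its critical points (it is a squared shifted $\mathrm{CN}^2$), $\phi_k'$ is odd about the same points, so $\phi_k\,\phi_k'=\tfrac12(\phi_k^2)'$ integrates to zero over a period; hence $\Theta(0,\phi_k)=0$. For the transversality condition compute
$$
\partial_r\Theta(0,\phi_k)=-\bigl(\phi_k,\phi_k''\bigr)_{L^2_{per}}=\int_0^L (\phi_k')^2\,dx=\|\phi_k'\|^2>0,
$$
where the last step is integration by parts and the positivity holds because $\phi_k$ is non-constant. The implicit function theorem then yields $\varepsilon_0>0$, a neighborhood of $\phi_k$ in $H^1_{per}$, and a unique $C^1$ map $u\mapsto \omega(u)$ with $\omega(\phi_k)=0$ and $\Theta(\omega(u),u)=0$, i.e. $\bigl(u(\cdot+\omega(u)),\phi_k'\bigr)_{L^2_{per}}=0$, for all $u$ in that neighborhood.

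Finally I would upgrade the ordinary $H^1_{per}$-neighborhood produced by the implicit function theorem to a $\rho$-neighborhood $U_\varepsilon(\phi_k)$, which is the only genuinely non-routine point. Given $u$ with $\rho(u,\phi_k)<\varepsilon$ there is some $r_0\in\mathbb{R}$ with $\|u(\cdot+r_0)-\phi_k\|_{H^1_{per}}$ small; apply the local result to $v:=u(\cdot+r_0)$ to get $\omega(v)$, and set $\omega(u):=\omega(v)+r_0$ (well-defined modulo the period $L$, using that $\phi_k$ and $\phi_k'$ are $L$-periodic, so the defining orthogonality relation is invariant under shifting $r_0$ by multiples of $L$). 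One checks $\Theta(\omega(u),u)=\Theta(\omega(v),v)=0$ and that the resulting map is $C^1$ on $U_\varepsilon(\phi_k)$ for $\varepsilon$ small enough, because the local chart around each translate $\phi_k(\cdot+s)$ is obtained from the one around $\phi_k$ by the smooth translation action and the charts agree on overlaps by uniqueness. Choosing $\varepsilon>0$ small enough that every $u\in U_\varepsilon(\phi_k)$ admits such an $r_0$ landing in the IFT neighborhood completes the proof. The main obstacle, then, is purely this globalization-over-the-orbit bookkeeping — verifying consistency and $C^1$-smoothness of $\omega$ as defined through arbitrary translates — rather than the local IFT argument itself.
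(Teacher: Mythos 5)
Your proof is correct and follows exactly the route the paper intends: the paper's own proof is a one-line citation of the implicit function theorem (referring to Lemma 4.1 of Bona--Souganidis--Strauss and Lemma 7.7 of Angulo's monograph), and your argument supplies precisely those details, including the non-degeneracy $\partial_r\Theta(0,\phi_k)=\|\phi_k'\|^2>0$ and the extension from an $H^1_{per}$-ball to the $\rho$-neighborhood $U_\varepsilon(\phi_k)$ via translates. No gaps; the only cosmetic remark is that $\int_0^L\phi_k\phi_k'\,dx=0$ already follows from periodicity alone, without invoking any symmetry of $\phi_k$.
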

\begin{proof}
The proof is based on an application of Implicit Function Theorem. See \cite[Lemma 4.1]{bss} or \cite[Lemma 7.7]{angulo4} for details.
\end{proof}

\begin{lemma}\label{bbm15}
Let
$$
\mathcal{A}=\left\{ \psi\in H^1_{per}([0,L]); ( \psi,M_k'(\phi_k))_{L^2_{per}}= ( \psi, \phi_k')_{L^2_{per}}=0\right\}.
$$
Under assumptions  $(P_0)$-$(P_3)$
 there exists $C>0$ such that 
$$\langle \mathcal{L}_k\psi,\psi\rangle \geq C \| \psi\|_{H^1_{per}}^2, \quad \psi \in \mathcal{A}.
$$
\end{lemma}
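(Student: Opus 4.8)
The plan is to establish the coercivity estimate by a standard spectral argument, exploiting the three conditions $(P_1)$--$(P_3)$ together with the two constraints defining $\mathcal{A}$. First I would recall the spectral picture from $(P_1)$ and $(P_2)$: the operator $\mathcal{L}_k$ has a single simple negative eigenvalue $\lambda_0<0$ with positive eigenfunction $\chi_0$, a simple zero eigenvalue with eigenfunction $\phi_k'$, and the rest of the spectrum is bounded below by some $\lambda_2>0$. Decompose any $\psi\in H^1_{per}$ as $\psi = a\chi_0 + b\phi_k' + p$, where $p$ lies in the spectral subspace associated with $[\lambda_2,\infty)$; then $\langle\mathcal{L}_k\psi,\psi\rangle = a^2\lambda_0\|\chi_0\|^2 + \langle\mathcal{L}_k p,p\rangle \geq a^2\lambda_0\|\chi_0\|^2 + \lambda_2\|p\|^2$. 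When $\psi\in\mathcal{A}$ the condition $(\psi,\phi_k')_{L^2_{per}}=0$ is not automatically $b=0$ (since $\phi_k'$ need not be orthogonal to $\chi_0$ or $p$), so I would instead use the two orthogonality conditions to control $a$: the issue is to show that $\langle\mathcal{L}_k\psi,\psi\rangle \geq 0$ on $\mathcal{A}$, and strictly positive away from the kernel.

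The key analytic input is that the constraint $(\psi,M_k'(\phi_k))_{L^2_{per}}=0$ prevents $\psi$ from being too aligned with the negative direction. Using \eqref{eq14}, namely $\mathcal{L}_k(\partial_k\phi_k) = -M_k'(\phi_k)$, and the fact from $(P_3)$ that $\Phi = \langle\mathcal{L}_k(\partial_k\phi_k),\partial_k\phi_k\rangle = -(M_k'(\phi_k),\partial_k\phi_k)_{L^2_{per}} < 0$, one sees that $\partial_k\phi_k$ is precisely a direction along which $\mathcal{L}_k$ is negative, and moreover $M_k'(\phi_k) = -\mathcal{L}_k(\partial_k\phi_k)$ is not orthogonal to $\partial_k\phi_k$. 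The standard argument (cf. the variational lemma of Weinstein, or Grillakis--Shatah--Strauss) is: minimize the quadratic form $\langle\mathcal{L}_k\psi,\psi\rangle$ over $\psi$ satisfying $\|\psi\|=1$, $(\psi,\phi_k')_{L^2_{per}}=0$, $(\psi,M_k'(\phi_k))_{L^2_{per}}=0$; a Lagrange-multiplier computation shows that a minimizer $\psi_0$ would satisfy $\mathcal{L}_k\psi_0 = \mu\psi_0 + \nu M_k'(\phi_k) + \theta\phi_k'$ for scalars $\mu,\nu,\theta$. One then proves $\mu\geq 0$: pairing with $\partial_k\phi_k$ and using $\mathcal{L}_k(\partial_k\phi_k) = -M_k'(\phi_k)$ together with $(P_3)$, one finds that the sign of $\mu$ is forced to be nonnegative, because assuming $\mu<0$ would exhibit two independent directions ($\chi_0$-like and the constrained $\psi_0$) on which $\mathcal{L}_k$ is negative definite, contradicting that $\lambda_0$ is the \emph{unique} negative eigenvalue (this is where $(P_1)$ enters decisively).

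Once $\langle\mathcal{L}_k\psi,\psi\rangle \geq 0$ on $\mathcal{A}$ is established, I would upgrade it to the strict coercivity $\langle\mathcal{L}_k\psi,\psi\rangle\geq C\|\psi\|_{H^1_{per}}^2$ by a compactness/contradiction argument: if no such $C>0$ existed, take a sequence $\psi_n\in\mathcal{A}$ with $\|\psi_n\|_{H^1_{per}}=1$ and $\langle\mathcal{L}_k\psi_n,\psi_n\rangle\to 0$; extract a weakly convergent subsequence $\psi_n\rightharpoonup\psi_\infty$ in $H^1_{per}$, strongly in $L^2_{per}$ (Rellich), so $\psi_\infty\in\mathcal{A}$ as well. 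Writing $\langle\mathcal{L}_k\psi_n,\psi_n\rangle = c\|\psi_n'\|^2 + (c-1)\|\psi_n\|^2 - \tfrac32(\phi_k^{1/2}\psi_n,\psi_n)_{L^2_{per}}$ and using that the last two terms converge, one gets $\limsup\|\psi_n'\|^2$ finite and in fact that $\psi_n\to\psi_\infty$ strongly in $H^1_{per}$, so $\|\psi_\infty\|_{H^1_{per}}=1$ and $\langle\mathcal{L}_k\psi_\infty,\psi_\infty\rangle=0$; by the nonnegativity just proved, $\psi_\infty$ is a minimizer, hence (by the Lagrange-multiplier analysis with $\mu=0$) $\mathcal{L}_k\psi_\infty \in \mathrm{span}\{M_k'(\phi_k),\phi_k'\}$, and a short computation using \eqref{eq14} and $(P_3)$ forces $\psi_\infty\in\ker\mathcal{L}_k=\mathrm{span}\{\phi_k'\}$; but $(\psi_\infty,\phi_k')_{L^2_{per}}=0$ then gives $\psi_\infty=0$, contradicting $\|\psi_\infty\|_{H^1_{per}}=1$. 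I expect the main obstacle to be the sign analysis of the Lagrange multiplier $\mu$ — that is the only place where all three of $(P_1)$, $(P_3)$, and the specific form $M_k'(\phi_k)=-\mathcal{L}_k(\partial_k\phi_k)$ must be combined, and it requires care to rule out $\mu<0$ without an explicit second negative direction; the rest is routine functional analysis, and since the statement says the proof is ``very close to the original ones,'' I would cite \cite{Grillakis}, \cite{weinstein1}, or \cite{angulo4} for the detailed bookkeeping.
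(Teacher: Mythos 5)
Your proposal is correct and follows essentially the same route the paper intends: the paper omits the proof and simply cites \cite{Grillakis} and \cite{angulo4} with $d''(c)>0$ replaced by $\Phi<0$, and what you write out is precisely that standard Weinstein/Grillakis--Shatah--Strauss argument, with $\partial_k\phi_k$ playing the role of the negative direction via $\mathcal{L}_k(\partial_k\phi_k)=-M_k'(\phi_k)$ and $\Phi<0$. The step you worried about ($\mu<0$) closes cleanly because the cross term $\langle\mathcal{L}_k\psi_0,\partial_k\phi_k\rangle=-(\psi_0,M_k'(\phi_k))_{L^2_{per}}=0$ vanishes on the constraint set, so $\mathrm{span}\{\psi_0,\partial_k\phi_k\}$ would be a two-dimensional negative subspace, contradicting $(P_1)$.
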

\begin{proof}
The proof is quite standard by now, so we omit the details. We refer the interested reader to \cite[Theorem 3.3]{Grillakis} or \cite[Lemma 7.8]{angulo4}. It should be noted that the assumption $d''(c)>0$ in such references must be replaced by $\Phi<0$.
\end{proof}

In the next lemma we prove that $\phi_k$ is a local minimum of the functional  $F_k$  restrict to the manifold $\Sigma_k$. It worth mentioning that its proof relies on the classical ideas with some changes in the spirit of  \cite[Lemma 4.6]{johnson1}.

\begin{lemma} \label{lemacoercividadebbm}
 Under the above assumptions, there exist $\varepsilon >0$  and a constant $C=C(\varepsilon)$ such that
\begin{equation}F_k(u)-F_k(\phi_k)\geq C\rho(u,\phi_k)^2,\label{bbm3}\end{equation}
for all  $u\in U_\varepsilon(\phi_k)$  satisfying  $M_k(u)=M_k(\phi_k)$.
\end{lemma}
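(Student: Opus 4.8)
The plan is to show that $\phi_k$ is a local minimizer of $F_k$ on the constraint manifold $\Sigma_k$ by combining a second-order Taylor expansion of $F_k$ around $\phi_k$ with the coercivity estimate of Lemma \ref{bbm15}. First I would recall that $\phi_k$ solves the Euler--Lagrange equation $F_k'(\phi_k)=0$ (this is precisely \eqref{el}, since $F_k'(u)=E'(u)+(c-1)Q'(u)+AV'(u)=-u''+(c-1)u-\tfrac32\operatorname{sgn}(u)|u|^{1/2}\cdot|u|^{1/2}+A$ evaluated at $\phi_k$), so that for $u$ near $\phi_k$ in $H^1_{per}$ one has the Taylor expansion
\begin{equation}
F_k(u)-F_k(\phi_k)=\tfrac12\langle \mathcal{L}_k(u-\phi_k),u-\phi_k\rangle+o(\|u-\phi_k\|_{H^1_{per}}^2).\label{taylorplan}
\end{equation}
Because $F_k$ is invariant under spatial translations and $\rho(u,\phi_k)=\inf_r\|u-\phi_k(\cdot+r)\|_{H^1_{per}}$, it suffices to prove \eqref{bbm3} after replacing $u$ by a suitable translate; using Lemma \ref{bbm14} we may assume, shrinking $\varepsilon$ if necessary, that $u\in U_\varepsilon(\phi_k)$ has been translated so that $(u-\phi_k,\phi_k')_{L^2_{per}}=0$ (the translate $u(\cdot+\omega(u))$ still lies in $\Sigma_k$ since $M_k$ is translation invariant and still satisfies $M_k=M_k(\phi_k)$).

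Next I would decompose the perturbation $y:=u-\phi_k$ according to the constraint. Write $y=a\,\phi_k'+b\,p+w$ where $p$ is chosen so that the orthogonality conditions defining $\mathcal{A}$ in Lemma \ref{bbm15} can be met — concretely, project $y$ onto the span of $\phi_k'$ and of $M_k'(\phi_k)$ and let $w$ be the remainder lying in $\mathcal{A}$. The component along $\phi_k'$ is already zero by the previous paragraph. For the component forcing $(y,M_k'(\phi_k))_{L^2_{per}}=0$: expanding the constraint $M_k(u)=M_k(\phi_k)$ to second order gives $(M_k'(\phi_k),y)_{L^2_{per}}=O(\|y\|_{H^1_{per}}^2)$, so the "bad" component of $y$ transverse to $\mathcal{A}$ is quadratically small. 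Substituting this decomposition into \eqref{taylorplan} and using that $\mathcal{L}_k\phi_k'=0$ (property $(P_2)$), the cross terms involving the small transverse component are controlled, and one is left with $\tfrac12\langle\mathcal{L}_k w,w\rangle$ up to errors of order $o(\|y\|_{H^1_{per}}^2)$. Here Lemma \ref{bbm15} applies directly: $\langle \mathcal{L}_k w,w\rangle\geq C\|w\|_{H^1_{per}}^2$, and since $\|w\|_{H^1_{per}}^2=\|y\|_{H^1_{per}}^2+o(\|y\|_{H^1_{per}}^2)=\rho(u,\phi_k)^2+o(\rho(u,\phi_k)^2)$, we obtain $F_k(u)-F_k(\phi_k)\geq \tfrac{C}{4}\rho(u,\phi_k)^2$ for $\varepsilon$ small, which is \eqref{bbm3}.

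The main obstacle I expect is the bookkeeping of the constraint: one must be careful that the manifold $\Sigma_k=\{M_k(u)=M_k(\phi_k)\}$ is genuinely codimension one and transverse to $\phi_k'$ (this uses $M_k'(\phi_k)\neq 0$ and that $M_k'(\phi_k)$ is not parallel to $\phi_k'$, which follows from \eqref{eq13.1} and \eqref{eq14}: $M_k'(\phi_k)=-\mathcal{L}_k(\partial_k\phi_k)$, and $\langle M_k'(\phi_k),\partial_k\phi_k\rangle=-\Phi>0$ by Lemma \ref{lemma5.3}, so $M_k'(\phi_k)$ has nonzero pairing with $\partial_k\phi_k$ while $\phi_k'\perp \partial_k\phi_k$ need not hold — the relevant point is simply $M_k'(\phi_k)\ne 0$). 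One must also ensure the quadratic smallness of the transverse component is uniform and that all the $o(\cdot)$ terms in the Taylor expansion of the $C^2$ (away from the origin) functional $F_k$ are genuinely $o(\|y\|_{H^1_{per}}^2)$ uniformly on $U_\varepsilon(\phi_k)$; since $\phi_k$ is bounded away from zero (its explicit form is a square of a strictly positive expression when $b>0$), the nonlinearity $\operatorname{sgn}(u)|u|^{5/2}$ is smooth in a neighborhood of $\phi_k$ and this regularity is available. These are standard but delicate points, handled essentially as in \cite[Lemma 4.6]{johnson1} and \cite[Lemma 7.8]{angulo4}, with $d''(c)>0$ replaced throughout by $\Phi<0$.
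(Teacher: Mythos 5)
Your proposal follows essentially the same route as the paper's proof: translate via Lemma \ref{bbm14} to remove the $\phi_k'$-component, decompose the perturbation into a piece along $M_k'(\phi_k)$ (shown to be quadratically small by expanding the constraint $M_k(u)=M_k(\phi_k)$) plus a remainder in $\mathcal{A}$, Taylor-expand $F_k$ using $F_k'(\phi_k)=0$ and $F_k''(\phi_k)=\mathcal{L}_k$, and apply Lemma \ref{bbm15} to the remainder before comparing its norm with that of the full perturbation. The only blemish is your parenthetical formula for $F_k'(u)$, which should read $-cu''+(c-1)u-|u|^{3/2}+A$ (the factor $\tfrac{3}{2}\phi_k^{1/2}$ belongs to the second variation $\mathcal{L}_k$, not the first); this does not affect the argument.
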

\begin{proof}
Since $F_k$ is invariant by translations, we have $F_k(u)=F_k(u(\cdot+r))$, for all  $ r\in\mathbb{R}$.  Thus, it suffices to prove that
$$F_k(u(\cdot+\omega(u)))-F_k(\phi_k)\geq C \rho(u,\phi_k)^2,$$
where $\omega$ is given in Lemma \ref{bbm14}. 

By fixing $u\in U_\varepsilon(\phi_k)\cap\Sigma_k$ (with $\varepsilon$ as in Lemma \ref{bbm14}) and making use of Lemma \ref{bbm14}, it follows that there exists $C_1\in\mathbb{R}$ such that
$$v:=u(\cdot+\omega(u))-\phi_k=C_1M_k'(\phi_k)+y,$$
where $y\in \mathcal{T}_k = \{M_k'(\phi_k)\}^\perp \cap\{\phi_k'\}^\perp$. Since  $u$ belongs to $ U_\varepsilon(\phi_k)$, up to a translation  in $\phi_k$, we may assume that $v=u(\cdot+\omega(u))-\phi_k$ satisfies $\|v\|_{H^1_{per}}<\varepsilon$. 

Let us prove that $C_1=O(\|v\|^2)$. In fact, using the invariance by translation of  $M_k$, a Taylor expansion gives
\begin{equation}
M_k(u)=M_k(u(\cdot+\omega(u)))=M_k(\phi_k)+\langle M_k'(\phi_k),v\rangle +O(\|v\|^2).
\label{bbm5}
\end{equation}
On the other hand, since $y\in\mathcal{T}_k$, we have $\langle M_k'(\phi_k),y\rangle=0$ and
\begin{equation}\langle M_k'(\phi_k),v\rangle=\langle M_k'(\phi_k), C_1M_k'(\phi_k)+y\rangle=C_1\langle M_k'(\phi_k), M'_k(\phi_k)\rangle= C_1N,
\label{bbm6}
\end{equation}
where $N$ is a constant depending only on $k$. Therefore, since $M_k(u)=M_k(\phi_k)$, it follows from  (\ref{bbm5}) and (\ref{bbm6}) that
\begin{equation}
C_1=O(\|v\|^2).
\label{bbm7}
\end{equation}

Because $\phi_k$ is strictly positive and $F_k$ is smooth away from zero, a Taylor expansion  at   $u(\cdot +\omega(u))=\phi_k+v$ yields
$$F_k(u)=F_k(u(\cdot+\omega(u)))=F_k(\phi_k)+\langle F_k'(\phi_k),v\rangle+\frac{1}{2}\langle F_k''(\phi_k)v,v\rangle + o(\|v\|^2).$$
Since $F'_k(\phi_k)=0$ and $F_k''(\phi_k)=\mathcal{L}_k $, we have 
\begin{equation}
F_k(u)-F_k(\phi_k)=\frac{1}{2}\langle \mathcal{L}_k  v,v\rangle+o(\|v\|^2).
\label{bbm8}
\end{equation}

Note that the equality $v=C_1M_k'(\phi_k)+y$ provides
\begin{equation}\label{bbm8.1}
\langle \mathcal{L}_k  v,v\rangle  = C_1^2 \langle\mathcal{L}_k  M'_k(\phi_k),M'_k(\phi_k)\rangle+ 2 C_1\langle \mathcal{L}_k  M'_k(\phi_k), y\rangle +\langle\mathcal{L}_k  y,y\rangle.
\end{equation}
In view of \eqref{bbm7}, we obtain positive constants $C_2, C_3$, and $C_4$, depending only on $k$, such that
$$|C_1^2 \langle\mathcal{L}_k  M'_k(\phi_k),M'_k(\phi_k)\rangle|\leq C_2\|v\|^4$$
and 
\[
\begin{split}
\ds |2C_1\langle \mathcal{L}_k  M'_k(\phi_k), y\rangle|& \ds\leq 2 |C_1| \|\mathcal{L}_k  M'_k(\phi_k)\|\|y\|\\
&\ds \leq 2 |C_1| \|\mathcal{L}_k  M'_k(\phi_k)\|\Big(\|y+C_1M_k'(\phi_k)\|+\|C_1M_k'(\phi_k)\|\Big)\\
&\ds\leq C_3\|v\|^3+C_4\|v\|^4.\\
\end{split}
\]
This last two inequalities together with \eqref{bbm8.1} imply
\begin{equation}\langle \mathcal{L}_k  v,v\rangle=\langle\mathcal{L}_k  y,y\rangle+ o(\|v\|^2).
\label{bbm9}
\end{equation}
Therefore, combining (\ref{bbm8}) with (\ref{bbm9}), we obtain
$$F_k(u)-F_k(\phi_k)=\frac{1}{2}\langle \mathcal{L}_k  y,y\rangle+o(\|v\|^2).$$
By using that $y\in\mathcal{T}_k$, we have  $y\in\mathcal{A}$. Thus, Lemma \ref{bbm15} gives
$$\langle \mathcal{L}_k  y,y\rangle \geq C \| y\|_{H^1_{per}}^2.$$ 
So,
\begin{equation}
F_k(u)-F_k(\phi_k)\geq C\|y\|_{H^1_{per}}^2+o(\|v\|^2).
\label{bbm12}
\end{equation}
By using the definition of $v$ and \eqref{bbm7} it is easily seen that 
\begin{equation}
\|y\|_{H^1_{per}}^2\geq \|v\|_{H^1_{per}}^2+o(\|v\|_{H^1_{per}}^2),
\label{bbm13}
\end{equation}
provided $v$ is small enough (if necessary we can take a smaller $\varepsilon>0$).

Finally,  (\ref{bbm12}) and (\ref{bbm13}) combine to establish that
$$
F_k(u)-F_k(\phi_k)\geq C\|v\|_{H^1_{per}}^2+o(\|v\|_{H^1_{per}}^2),
$$
which, for $\varepsilon>0$ sufficient small, gives
$$
F_k(u)-F_k(\phi_k)\geq C(\varepsilon)\|v\|_{H^1_{per}}^2\geq C(\varepsilon) \rho(u,\phi_k)^2
$$
and completes the proof of the lemma.
\end{proof}

\begin{remark}
If we follow the same strategy as in \cite{johnson1}, we can also show that $F_k$ is coercive on the codimension two manifold
$$
\tilde{\Sigma}_k=\{u\in H^1_{per}([0,L]);\; Q(u)=Q(\phi_k),\; V(u)=V(\phi_k)\}.
$$
However, in this situation our Theorem \ref{bbmmeantheorem} would be restricted to perturbations in $\tilde{\Sigma}_k$. Since we only dispose of a one-parameter family of periodic waves, we do not know how to use a triangle-type inequality in order to prove Theorem \ref{bbmmeantheorem} for general perturbations in $H^1_{per}([0,L])$. Note that clearly, $\tilde{\Sigma}_k\subset \Sigma_k$.
\end{remark}

Having disposed of this preliminaries steps, we now turn  to prove our main theorem.

\begin{proof}[Proof of Theorem \ref{bbmmeantheorem}] The proof follows standard lines with some modifications. Assume by contradiction that  $\phi_k$ is  $H^1_{per}$-unstable. Then, for each $n\in\N$, we can choose an initial data  $w_n:=u_n(0)\in U_{\frac{1}{n}}(\phi_k)$ and $\varepsilon>0$ such that
$$\rho(w_n,\phi_k)\rightarrow 0, \quad\mbox{but}\quad \sup_{t\geq0}\rho(u_n(t),\phi_k)\geq \varepsilon ,$$
where $u_n(t)$ is the solution of  (\ref{bbm}) with initial data  $w_n$. Here, we choose $\varepsilon>0$ from  Lemma \ref{lemacoercividadebbm}.  By continuity in $t$, we can also choose the first time  $t_n>0$, such that
\begin{equation}
\rho(u_n(t_n),\phi_k)=\frac{\varepsilon}{2}.
\label{bbm16}
\end{equation}

The plan now is to obtain a contradiction with  (\ref{bbm16}). For that, let $f_n$ be the function defined as
\[
\begin{split}
f_n(\alpha)&=M_k(\alpha u_n(t_n))\\
&=\alpha^2\frac{\partial c}{\partial k}\int_0^L\Big(|u_n(t_n)|^2+|u_n'(t_n)|^2\Big)dx+\alpha\frac{\partial A}{\partial k}\int_0^Lu_n(t_n)dx=:\alpha^2a_n+\alpha b_n. 
\end{split}
\]
Since $f_n(0)=0$, $a_n>0$ and $M_k(\phi_k)>0$, it follows that there exists, for each   $n\in\mathbb{N}$, an $\alpha_n>0$ such that $f_n(\alpha_n)=M_k(\phi_k)$. The fact that   $M_k(\phi_k)>0$ can be proved in view of the explicit form of the quantities involved following the arguments in Lemma \ref{lemma5.3}. Since it demands too many calculations we omit the details.

As a consequence, we obtain a sequence of positive numbers  $(\alpha_n)_{n\in\mathbb{N}}$, such that
\begin{equation}
M_k(\alpha_nu_n(t_n))=M_k(\phi_k),\,\  n\in\mathbb{N}.
\label{bbm17}
\end{equation}
Therefore, the sequence  $(\alpha_nu_n(t_n))_{n\in\mathbb{N}}$ belongs to the manifold  $\Sigma_k$. This sequence will be the main ingredient to obtain a contradiction.

First, let us show that $(\alpha_n)_{n\in\mathbb{N}}$ admits at least one subsequence which converges to  $1$. In order to make the notation simpler, we set 
$$Q_k:=\frac{\partial c}{\partial k} Q\quad \mbox{and}\quad V_k:=\frac{\partial A}{\partial k} V.$$
The continuity of $Q$ and $V$ provides, as $n\to\infty$,
\begin{equation}\label{bbm18}
Q_k(w_n)\longrightarrow Q_k(\phi_k)=:a,\qquad
V_k(w_n)\longrightarrow V_k(\phi_k)=:b,
\end{equation}
and
\begin{equation}\label{18.1}
M_k(w_n)\longrightarrow M_k(\phi_k).
\end{equation}
Also, since $Q$ and $V$ are conserved quantities, 
\begin{equation}\begin{split}
\varrho& :=|\alpha_n^2Q_k(w_n)+\alpha_nV_k(w_n)-(Q_k(w_n)+V_k(w_n))|\\
&=|\alpha_n^2Q_k(u_n(t_n))+\alpha_nV_k(u_n(t_n))-(Q_k(w_n)+V_k(w_n))|\\
&=|Q_k(\alpha_nu_n(t_n))+V_k(\alpha_nu_n(t_n))-(Q_k(w_n)+V_k(w_n))|\\
&=|M_k(\alpha_nu_n(t_n))-M_k(w_n)|.
\end{split}
\label{bbm19}
\end{equation}
From \eqref{bbm17} and \eqref{18.1}, it follows that $\varrho\to0$, as $n\to\infty$.
Since
\begin{equation}\label{19.1}
\begin{split}
0&\leq |\alpha_n^2Q_k(w_n)+\alpha_nV_k(w_n)-(a+b)|\\
&\leq |\alpha_n^2Q_k(w_n)+\alpha_nV_k(w_n)-(Q_k(w_n)+V_k(w_n))|\\
&\quad + |(Q_k(w_n)+V_k(w_n))-(a+b)|\\
&\leq \varrho +|Q_k(w_n)-a| + |V_k(w_n)-b|
\end{split}
\end{equation}
and, in view of \eqref{bbm18},  the last line in \eqref{19.1} goes to 0, as $n\to\infty$, we deduce that
\begin{equation}
z_n:=\alpha_n^2Q_k(w_n)+\alpha_nV_k(w_n) \longrightarrow a+b, \quad \mbox{as} \;\; n\rightarrow \infty.
\label{bbm20}
\end{equation}

Next we claim that $(\alpha_n)_{n\in\mathbb{N}}$ is a bounded sequence. On the contrary, suppose  $(\alpha_n)_{n\in\mathbb{N}}$ is unbounded. Because $Q_k(w_n)>0$ and $Q_k(w_n)$ and $V_k(w_n)$ are bounded  we obtain, up to a subsequence,
$$ 
z_n=\alpha_n(\alpha_nQ_k(w_n)+V_k(w_n))\longrightarrow +\infty, \quad \mbox{as} \;\; n\rightarrow \infty,$$
which contradicts \eqref{bbm20}. Therefore, there exist a subsequence, which we still denote by  $(\alpha_n)_{n\in\mathbb{N}}$ and $\alpha_0\geq0$ such that
$$
\alpha_n\longrightarrow \alpha_0, \quad \mbox{as} \;\; n\rightarrow \infty.
$$
Taking the limit in $z_n$, it follows from \eqref{bbm18} and (\ref{bbm20}) that
$\alpha_0^2a+\alpha_0b=a+b,$
i.e.,
$$(1-\alpha_0)[(1+\alpha_0)a+b]=0.$$
This implies 
$$\alpha_0=1 \quad \mbox{or}\quad \alpha_0=-\frac{b+a}{a}=-\left(\frac{b}{a}+1\right).$$
The fact that $M_k(\phi_k)>0$ and $Q(\phi_k)>0$ combined with \eqref{deric} yield
$$
1+\frac{b}{a}=1+\frac{\frac{\partial A}{\partial k}V(\phi_k)}{\frac{\partial c}{\partial k}Q(\phi_k)}>0.
$$
Since $\alpha_0\geq0$, we therefore obtain  $\alpha_0=1$.

Now, we will use the sequence  $(\alpha_nu_n(t_n))_{n\in\mathbb{N}}$ and its properties, to prove the next two claims.

\vskip.3cm
\noindent {\bf{Claim 1:}} $\rho(u_n(t_n),\alpha_nu_n(t_n))\longrightarrow 0$, as $ n\rightarrow \infty.$

In fact, by definition,
\begin{equation}
\begin{split}
\ds\rho(u_n(t_n),\alpha_nu_n(t_n))&\ds=\inf_{r\in\mathbb{R}}\|u_n(\cdot,t_n)-\alpha_nu_n(\cdot+r,t_n)\|_{H^1_{per}}\\
&\leq \|u_n(t_n)-\alpha_nu_n(t_n)\|_{H^1_{per}}=|(1-\alpha_n)|\|u_n(t_n)\|_{H^1_{per}}.\\
\end{split}
\label{bbm21}
\end{equation}
On the other hand, since $\ds\rho(u_n(t_n),\phi_k)=\frac{\varepsilon}{2}$, there exists $r\in\mathbb{R}$ such that
$$
\|u_n(t_n)\|_{H^1_{per}}\leq \|u_n(t_n)-\phi_k(\cdot+r)\|_{H^1_{per}}+\|\phi_k(\cdot+r)\|_{H^1_{per}}<\varepsilon+\|\phi_k(\cdot+r)\|_{H^1_{per}},
$$
which is to say that $\left(\|u_n(t_n)\|_{H^1_{per}}\right)_{n\in\N}$ is uniformly bounded. Taking the limit in \eqref{bbm21}, as $n\to\infty$, and taking into account that $\alpha_n\longrightarrow 1$, we obtain the claim.

\vskip.3cm
\noindent {\bf{Claim 2:}} $\rho(\alpha_nu_n(t_n),\phi_k)\longrightarrow 0$, as $n\rightarrow\infty$.

In fact, from Claim 1 and (\ref{bbm16}), we see that
$$
\rho(\alpha_nu_n(t_n),\phi_k)\leq \rho(\alpha_nu_n(t_n),u_n(t_n))+\rho(u_n(t_n),\phi_k)<\frac{\varepsilon}{3}+\frac{\varepsilon}{2}=\frac{5\varepsilon}{6}<\varepsilon,
$$
for all $n$ large enough. This means that for $n$ large enough, $\alpha_nu_n(t_n)\in U_\varepsilon(\phi_k)$. Moreover, we have already seen in  (\ref{bbm17}) that $\alpha_nu_n(t_n)\in\Sigma_k$. These two facts tell us that   $\alpha_nu_n(t_n)\in\Sigma_k\cap U_{\varepsilon}(\phi_k)$. Consequently,   Lemma \ref{lemacoercividadebbm} implies
\[
\begin{split}
\rho(\alpha_nu_n(t_n),\phi_k)^2&\leq C|F_k(\alpha_nu_n(t_n))-F_k(\phi_k)|\\
&\leq C|F_k(\alpha_nu_n(t_n))-F_k(u_n(t_n))|+ C|F_k(u_n(t_n))-F_k(\phi_k)|\\
&=C|F_k(\alpha_nu_n(t_n))-F_k(u_n(t_n))|+ C|F_k(w_n)-F_k(\phi_k)|.
\end{split}
\]
Taking the limit, as $n\to\infty$, in the last inequality, the continuity of $F_k$ and the boundedness of $(\alpha_nu_n(t_n))_{n\in\mathbb{N}}$ in $H^1_{per}([0,L])$,
 yield Claim 2.
\vskip.3cm

Finally, Claims 1 and 2 combine to give
$$
\frac{\varepsilon}{2}=\rho(u_n(t_n),\phi_k)\leq \rho(u_n(t_n),\alpha_nu_n(t_n))+\rho(\alpha_nu_n(t_n),\phi_k) \longrightarrow 0,
$$
as $n\to\infty$, which is a contradiction. The proof of Theorem \ref{bbmmeantheorem} is thus established.
\end{proof}

\begin{remark}\label{remark5.8}
It should be noted that our family of periodic waves in Corollary \ref{lascor} is a zero-energy family, which means that the constant $B$ in \eqref{eqquad} was set to be zero.  As we note in Remark \ref{remB}, one can also obtain periodic solutions with $B\neq0$. However, our approach does not apply in this general situation and we do not know if such waves are stable or not.

Related to this, Johnson \cite{johnson2} studied the spectral and orbital stability of periodic waves for the generalized BBM equation   \eqref{genbbm} with $f(u)=u+g(u)$ for some $C^2$ function $g$. The author considered  the full family of periodic waves (depending on $c$, $A$, and $B$) and gave a criterion to establish the orbital stability of such waves depending on the sign of certain determinants, which encode some geometric information about the underlying manifold of periodic solutions.
\end{remark}

\section*{Appendix}
In this short Appendix, we collect the formulas for the functions introduced in the proof of Lemma \ref{lemma5.3}. The functions $p_i$ are given by
$$\begin{array}{l}
\ds p_1(k):=(2k^6-7k^4+10k^2-5)+\left(2k^4-6k^2+4\right)\sqrt{k^4-k^2+1},
\\ \\
\ds p_2(k):= (-4k^6+15k^4-21k^2+14)+(-4k^4+4k^2-4) \sqrt{k^4-k^2+1},
\\ \\
\ds p_3(k):= (-9k^4+9k^2-9),
\\ \\
\ds p_4(k):=(-k^8+4k^6-6k^4+5k^2
-2)+(-k^6-k^4+4k^2-2)\sqrt{k^4-k^2+1},
\\ \\
\ds p_5(k):= (2k^8-4k^6+6k^4-4k^2+2)+(2k^6-3k^4-3k^2+2)\sqrt{k^4-k^2+1},
\\ \\
\ds p_6(k):=\left(40k^{10}-276k^8+686k^6-878k^4+642k^2-214\right)\\
\hspace{1,4cm}+( 40k^8-175k^6+300k^4-277k^2+110)\sqrt{k^4-k^2+1},
\\ \\
\ds   p_7(k):=(-80k^{10}+494k^8-1256k^6+1684k^4-1270k^2+508)\\
\hspace{1,4cm}+(-80k^8+130k^6-270k^4+280k^2-140)\sqrt{k^4-k^2+1},
\\ \\
\ds p_8(k):=(-294k^8+588k^6-882k^4+588k^2-294)\\
\hspace{1,4cm}+(30k^6-45k^4-45k^2+30)\sqrt{k^4-k^2+1},
\\ \\
\ds p_9(k):= (10k^{10}-55k^8+125k^6-155k^4+105k^2-30)\\
\hspace{1,4cm} +(28k^8-98k^6+154k^4-126k^2+42)\sqrt{k^4-k^2+1},
\end{array}
$$

$$\begin{array}{l}
\ds p_{10}(k):=(30k^{10}-15k^8-90k^6+195k^4-180k^2+60)\\
\hspace{1,5cm} +(-42k^8+168k^6-252k^4+210k^2-84)\sqrt{k^4-k^2+1},
\end{array}$$
and
$$\begin{array}{l}
\ds p_{11}(k):= (-30k^{10}+75k^8-30k^6-30k^4+75k^2-30)\\
\hspace{1,5cm}+(42k^8-84k^6+126k^4-84k^2+42)\sqrt{k^4-k^2+1}.
\end{array}
$$
The functions $m_i$ are defined as
$$\begin{array}{l}
m_1(k):=\left(p_6(k)K^2(k)+p_7(k)E(k)K(k)+p_8(k)E^2(k)\right),
\\ \\
m_2(k):=\left(p_9(k)K^2(k)+p_{10}(k)E(k)K(k)+p_{11}(k)E^2(k)\right),
\\ \\
m_3(k):=\left(2k^6-3k^4-3k^2+2+(2k^4-2k^2+2)\sqrt{k^4-k^2+1}\right)\times \\
\hspace{1,6cm}\left(p_1(k)K^2(k) +p_2(k)E(k)K(k)+p_3(k)E^2(k)\right),
\end{array}$$
and
$$\begin{array}{l}
m_4(k):=\left(\left((k^4-k^2+1)+(k^2-2)\sqrt{k^4-k^2+1}\right)K(k)+3\sqrt{k^4-k^2+1}E(k)\right)\times
\\ 
\hspace{1,6cm}\left(p_4(k)K(k)+p_5(k)E(k)\right).
\end{array}
$$
\section*{Acknowledgements}

This work is part of the Ph.D Thesis of the first author, which was concluded at IMECC-UNICAMP, under the guidance of the second author. The first author acknowledges the financial support from Capes and CNPq. The second author is partially supported by CNPq and FAPESP. The authors gratefully acknowledge the referees for the constructive remarks which improve the presentation of the manuscript.


\begin{thebibliography}{100}

\bibitem{angulo7}
B. Alvarez and J. Angulo, Existence and stability of periodic travelling-wave solutions of the Benjamin equation,  \textit{Commun. Pure Appl. Anal.} 4 (2005), 367--388.

\bibitem{AN2} J. Angulo and F. Natali, Stability and instability of periodic
travelling waves solutions for the critical Korteweg-de Vries and
non-linear Schr\"odinger equations,  \textit{Phys. D} 238 (2009)
603--621.

\bibitem{angulo4}
J. Angulo, Nonlinear dispersive equations: Existence and stability of solitary and periodic travelling wave solutions, Math. Surveys Monogr.   156, American Mathematical Society, 2009.

\bibitem{angulo5}
J. Angulo and A. Pastor, Stability of periodic optical solitons for a nonlinear Schrödinger system, \textit{Proc. Roy. Soc. Edinburgh Sect. A} 139 (2009), 927--959.





\bibitem{angulo3} J. Angulo, Nonlinear
stability of periodic travelling wave solutions to the
Schrödinger and modified Korteweg-de Vries equations,   \textit{J. Differential Equations 235} (2007), 1--30.


\bibitem{acn}
J. Angulo, E. Cardoso and F. Natali, Stability properties of periodic traveling waves for the Intermediate Long Wave equation, arXiv:1503.04350.

\bibitem{abs} J. Angulo, C. Banquet, and M. Scialom, Stability for the modified and fourth-order Benjamin-Bona-Mahony equations,   \textit{Discrete Contin. Dyn. Syst.} 30 (2011), 851--871.


\bibitem{angulo1} J. Angulo, J. L. Bona, and M. Scialom, Stability of cnoidal waves,  \textit{Adv.  Differential Equations} 11 (2006), 1321--1374.

\bibitem{be}  T. B. Benjamin, The stability of solitary waves,  \textit{Proc. Roy. Soc. (London) Ser. A} 328 (1972), 153--183.

\bibitem{bbm} T. B. Benjamin, J. L. Bona and J. J. Mahony, Model equations for long waves in nonlinear dispersive systems,  \textit{Phil. Trans. Royal Soc. London, Ser. A} 272 (1972), 47--78.

\bibitem{bss}  J. L. Bona, P. E. Souganidis, and W. A. Strauss, Stability and instability of solitary waves of the Korteweg-de Vries type,  \textit{Proc. R. Soc. Lond. A} 411 (1987), 395--412.

\bibitem{bmr}  J. L. Bona, W. R. McKinney, and J. M. Restrepo, Stable and unstable solitary-wave solutions of the generalized regularized long-wave equation,  \textit{J. Nonlinear Sci.} 10 (2000), 603--638.

\bibitem{brosl}  L. J. F. Broer and F. W. Sluijter, Stable approximate equations for ion-acoustic waves,  \textit{Phys. Fluids} 20 (1977), 1458--1460.

\bibitem{brovati}  L. J. F. Broer, E. W. C. Van Groesen, and J. M. W. Timmers, Stable model equations for long water waves,  \textit{Appl. Sci. Res.} 32 (1976), 619--636.

\bibitem{bjk}  J. C. Bronski, M. A. Johnson, and T. Kapitula, An index theorem for the stability of periodic travelling waves of Korteweg-de Vries type,  \textit{Proc. Roy. Soc. Edinburgh Sect. A} 141 (2011), 1141--1173.


\bibitem{depas} T. P. de Andrade and A. Pastor, Nonlinear stability of periodic waves for modified Korteweg-de Vries and Gardner equations,  in preparation.


\bibitem{friedman}
P.F. Byrd and M.D. Friedman,  Handbook of elliptic integrals for engineers and scientists, 2nd ed., Springer, NY, 1971.

\bibitem{ea} M. S. P. Eastham, The Spectral Theory of Periodic Differential
Equations, Scottish Academic Press, Edinburgh, 1973.

\bibitem{fp} L. G. Farah and A. Pastor, On the periodic Schr\"odinger-Boussinesq system,  \textit{J. Math. Anal. Appl.} 368 (2010),  330--349.

\bibitem{gill} T. S. Gill, C. Bedi, and N. S. Saini, Higher order nonlinear effects on wave structures in a four-component dusty plasma with nonisothermal electrons,  \textit{Phys. Plasmas} 18 (2011),  043701.

\bibitem{Grillakis} M. Grillakis, M. Shatah and W. Strauss, Stability theory
of solitary waves in the presence of symmetry I,  \textit{J. Functional
Anal.} 74 (1987), 160--197.

\bibitem{hik} S. Hakkaev, I. D. Iliev, and K. Kirchev,  Stability of periodic traveling waves for complex modified Korteweg-de Vries equation, \textit{J. Differential Equations} 248 (2010), 2608--2627.

\bibitem{jack}
J. K. Hale,  Ordinary Differential Equation, Dover Publications, Revised Edition, New York, 1980.


\bibitem{ince} E. L. Ince,  The periodic Lam{\'e} functions,  \textit{Proc. Roy. Soc. Edinburgh} 60 (1940), 47--63.

\bibitem{johnson1} M. A. Johnson, Nonlinear stability of periodic traveling wave solutions of the generalized Korteweg-de Vries equation,  \textit{SIAM J. Math. Anal.} 41 (2009), 1921--1947.

\bibitem{johnson2} M. A. Johnson, On the stability of periodic solutions of the generalized Benjamin-Bona-Mahony equation,  \textit{Physica D} 239 (2010), 1892--1908.

\bibitem{lin} Z. Lin, Instability of nonlinear dispersive solitary waves,  \textit{J. Funct. Anal.} 255   (2008), 1191--1224.

\bibitem{natali2} F. Natali and A. Neves, Orbital stability of periodic waves,  \textit{IMA J. Appl. Math.} 79   (2013), 1--19.

\bibitem{NP1} F. Natali and A. Pastor,
Stability properties of periodic standing waves for the
Klein-Gordon-Schrödinger system,\textit{Commun. Pure Appl. Anal.} 9 (2010), 413--430.

\bibitem{natali-pastor} F. Natali and A. Pastor, Stability and instability of periodic standing wave solutions for some Klein-Gordon equations, \textit{J. Math. Anal. Appl.} 347 (2008), 428--441.


\bibitem{Neves1}
A. Neves,  Floquet's Theorem and stability of periodic solitary waves, \textit{J. Dynam. Differential Equations} 21 (2009), 555--565.

\bibitem{Neves2}
A. Neves, Isoinertial family of operators and convergence of KdV cnoidal waves to solitons, \textit{J. Differential Equations} 244 (2008), 875--886.

\bibitem{sch}
H. Schamel, Stationary solitary, Snoidal and Sinusoidal ion acoustic waves, \textit{Plasma Physics} 14 (1972), 905--924.

\bibitem{sch1}
H. Schamel, A modified Korteweg-de Vries equation for ion acoustic waves due to resonant electrons, \textit{J. Plasma Physics} 9 (1973), 377--387.

\bibitem{ss}
P.E. Souganidis and W.A. Strauss, Instability of a class of dispersive solitary waves, \textit{Proc. Roy. Soc. Edinburgh Sect. A} 114 (1990), 195--212.

\bibitem{tran} M. Q. Tran, Ion acoustic solitons in a plasma: A review of their
experimental properties and related theories,  \textit{Phys. Scr.}  20 (1979),  317--327.

\bibitem{weinstein1} M. I. Weinstein, Lyapunov stability of ground states of nonlinear dispersive evolution equations,  \textit{Comm. Pure Appl. Math.}  39 (1986),  51--67.

\bibitem{weinstein2} M. I. Weinstein, Existence and dynamic stability of solitary wave solutions of equations arising in long wave propagation, \textit{Comm. Partial Differential Equations} 12 (1987), 1133--1173.

 \end{thebibliography}
\end{document}